\let\oldmarginpar\marginpar
\renewcommand\marginpar[1]
\newcommand{\la}{\langle}
\newcommand{\ra}{\rangle}
\newtheorem{theorem}{\bf Theorem}[section]
\newtheorem{lemma}[theorem]{\bf Lemma}
\newtheorem{corollary}[theorem]{\bf Corollary}
\newcommand{\CC}{{\Bbb C}}
\newcommand{\RP}{{\Bbb RP}}
\newcommand{\CP}{{\Bbb CP}}
\newcommand{\FF}{{\Bbb F}}
\newcommand{\NN}{{\Bbb N}}
\newcommand{\RR}{{\Bbb R}}
\newcommand{\ZZ}{{\Bbb Z}}
\newcommand{\ggreat}{>\kern-.7ex>}
\newcommand{\ssmall}{<\kern-.7ex<}
\newcommand{\qu}{/\kern-.7ex/}
\newcommand{\exh}{\to\kern-1.8ex\to}
\newcommand{\aA}{{\EuScript{T}_A}}
\newcommand{\cC}{{\EuScript{C}}}
\newcommand{\hH}{{\EuScript{H}}}
\newcommand{\jJ}{{\EuScript{J}}}
\newcommand{\pP}{{\EuScript{P}}}
\newcommand{\tT}{{\EuScript{T}}}
\newcommand{\GL}{\operatorname{GL}}
\newcommand{\Aut}{\operatorname{Aut}}
\newcommand{\Bir}{\operatorname{Bir}}
\newcommand{\Diff}{\operatorname{Diff}}
\newcommand{\Ker}{\operatorname{Ker}}
\newcommand{\Mor}{\operatorname{Mor}}
\renewcommand{\O}{\operatorname{O}}
\newcommand{\sd}{\operatorname{sd}}
\newcommand{\SO}{\operatorname{SO}}
\newcommand{\SU}{\operatorname{SU}}
\newcommand{\Spec}{\operatorname{Spec}}
\newcommand{\vt}{\vartriangleleft}
\title[Finite group actions on homology spheres and manifolds with $\chi\neq 0$]
{Finite group actions on homology spheres and manifolds with nonzero Euler characteristic}
\author{Ignasi Mundet i Riera}
\address{Departament d'\`Algebra i Geometria\\
Facultat de Matem\`atiques\\
Universitat de Barcelona\\
Gran Via de les Corts Catalanes 585\\
08007 Barcelona \\
Spain}
\email{ignasi.mundet@ub.edu}
\date{February 2, 2019}
\subjclass[2010]{57S17,54H15}
\thanks{This work has been partially supported by the (Spanish) MEC Projects MTM2012-38122-C03-02
and MTM2015-65361-P}
\begin{document}

\maketitle

\begin{abstract}
Let $X$ be a smooth manifold belonging to one of these three
collections: acyclic manifolds (compact or not, possibly
with boundary), compact connected manifolds (possibly with
boundary) with nonzero Euler characteristic, integral
homology spheres. We prove that $\Diff(X)$ is Jordan. This
means that there exists a constant $C$ such that any finite
subgroup $G$ of $\Diff(X)$ has an abelian subgroup whose index
in $G$ is at most $C$. Using a result of Randall and Petrie we
deduce that the automorphism groups of connected, non necessarily compact,
smooth real affine varieties with nonzero Euler characteristic are Jordan.
\end{abstract}

\section{Introduction}

\subsection{Main result and context}
One of the most fruitful sources of results in finite
transformation groups has been the search for nonlinear
analogues of theorems on linear actions. These include, to
mention a few, results of P.A. Smith \cite[Chap. III]{B},
Dotzel--Hamrick \cite{DH}, tom Dieck \cite{D0} and
Buchdahl--Kwasik--Schultz \cite{BKS}. See also the monographs
\cite{B,Br,D} and the surveys by Davis \cite{MWD} and Schultz
\cite{Sch} for many more examples.

In this paper we prove
nonlinear analogues of the following classical theorem of
Camille Jordan \cite{J}.

\begin{theorem}[Jordan]
\label{thm:Jordan}
For any $n\in\NN$ there exists a constant $C_n$ such that if
$G$ is a finite subgroup of $\GL(n,\RR)$ then $G$ has an
abelian subgroup $A\leq G$ of index at most $C_n$.
\end{theorem}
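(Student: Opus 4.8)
The plan is to turn this into a metric problem inside the unitary group and then exploit the fact that commutators contract near the identity. First I would complexify, viewing $G\subseteq\GL(n,\RR)\subseteq\GL(n,\CC)$, and average an arbitrary Hermitian inner product over the finite group $G$ to produce a $G$-invariant one; this shows $G$ is conjugate into the unitary group $\U(n)$. Since conjugation preserves the index of a subgroup, I may assume $G\subseteq\U(n)$ and work with the metric $d(x,y)=\|x-y\|$ coming from the operator norm, for which $\U(n)$ is a compact metric space and left/right translation and inversion are isometries.

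The key estimate is that, for $x,y\in\U(n)$,
\[
\|xyx^{-1}y^{-1}-I\|=\|xy-yx\|=\|(x-I)(y-I)-(y-I)(x-I)\|\le 2\,\|x-I\|\,\|y-I\|,
\]
so commutators of elements near $I$ are quadratically closer to $I$. Fix $\epsilon=1/3$, put $B=\{g\in\U(n):\|g-I\|<\epsilon\}$ and $H=\langle G\cap B\rangle$. To bound $[G:H]$ I would cover the compact group $\U(n)$ by a number $N=N(n)$ of balls of radius $\epsilon/2$, with $N$ depending only on $n$. If $g_1,g_2\in G$ lie in a common such ball then $\|g_1^{-1}g_2-I\|=\|g_1-g_2\|<\epsilon$, so $g_1^{-1}g_2\in G\cap B\subseteq H$ and $g_1H=g_2H$; hence distinct cosets meet distinct balls and $[G:H]\le N(n)=:C_n$, a constant depending only on $n$.

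It remains to prove that $H$ is abelian, which is the heart of the matter. Let $z\in H\setminus\{I\}$ minimize $\delta:=\|z-I\|$ (possible as $G$ is finite). For any generator $a\in G\cap B$ the estimate gives $\|[a,z]-I\|\le 2\epsilon\,\delta<\delta$ (since $2\epsilon<1$), and $[a,z]\in H$, so minimality forces $[a,z]=I$; as the $a$ generate $H$, the element $z$ is central. I would then induct on $n$: if $z$ is not scalar, its eigenspace decomposition $\CC^n=\bigoplus_\lambda V_\lambda$ is $H$-invariant with each $\dim V_\lambda<n$, each restriction $\rho_\lambda(H)\subseteq\U(V_\lambda)$ is again generated by near-identity elements and hence abelian by induction, and $H$ embeds faithfully into $\prod_\lambda\rho_\lambda(H)$, so $H$ is abelian. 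If $z$ is scalar, then either $H$ acts reducibly, in which case I split off a proper invariant subspace and induct exactly as above, or $H$ acts irreducibly, the case I expect to be most delicate.

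The main obstacle is precisely this irreducible scalar case: showing that near-identity elements cannot assemble into a nonabelian group whose only central elements are scalars, as happens for Heisenberg-type (nilpotent of class two) configurations. The contraction estimate hands me a central minimal element almost for free, but ruling out such behaviour is exactly where the content lies — the point being that the near-identity hypothesis is incompatible with the ``large'' generators an irreducible nonabelian configuration requires; one clean route is to pass to the projective unitary group $\operatorname{PU}(n)$, where the images of $G\cap B$ remain near the identity, and run the same contraction argument. Throughout I would keep $\epsilon$, and hence $C_n$, independent of $G$, since the whole point is a uniform bound. Finally, I note that the statement is also a special case of Theorem \ref{thm:main}: a faithful linear action of $G$ on the acyclic manifold $\RR^n$ falls under collection (1), so an abelian subgroup of bounded index already exists by that (much deeper) result.
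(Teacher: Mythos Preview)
The paper does not prove Theorem~\ref{thm:Jordan}; it is quoted as Jordan's classical 1878 result and used as a black box (notably in the proofs of Theorems~\ref{thm:R-n}, \ref{thm:compact-X}, and \ref{thm:spheres}). Your sketch is the standard modern proof, often attributed to Frobenius: conjugate into $\U(n)$, use the contraction $\|[x,y]-I\|\le 2\|x-I\|\,\|y-I\|$, let $H=\langle G\cap B\rangle$, bound $[G:H]$ by a covering number, and show $H$ abelian via the minimal nontrivial element. This is correct in outline, and your $\operatorname{PU}(n)$ suggestion for the irreducible scalar case does close the gap: if $\bar H=\pi(H)\subseteq\operatorname{PU}(n)$ is nontrivial, its minimal nontrivial element $\bar z$ is central in $\bar H$ by the same contraction estimate; any lift $z'\in H$ is non-scalar and satisfies $az'a^{-1}=\zeta_a z'$ for each generator $a\in G\cap B$. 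If $\zeta_a\neq 1$ then $a$ maps each eigenspace of $z'$ onto an orthogonal one, forcing $\|a-I\|\ge\sqrt{2}$, a contradiction. Hence $z'$ is a genuinely central non-scalar element of $H$, and you are back in the case already handled by induction on $n$.

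Your final remark, however, is circular and should be deleted. The paper's proof of Theorem~\ref{thm:main} for acyclic manifolds (Theorem~\ref{thm:R-n}) proceeds by finding a fixed point via Smith theory, linearising there to embed $G\hookrightarrow\GL(T_xX)\simeq\GL(n,\RR)$, and then \emph{invoking Theorem~\ref{thm:Jordan}} to extract the abelian subgroup of bounded index. So Jordan's theorem is an input to Theorem~\ref{thm:main}, not a consequence of it.
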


One can naturally talk about linear actions on spheres, projective spaces, affine spaces
or disks. Our main theorem gives nonlinear analogues of Jordan's theorem in each of these cases:

\begin{theorem}
\label{thm:main}
Let $X$ be a smooth manifold belonging to one of these three collections:
acyclic manifolds (compact or not, possibly with boundary), connected
compact manifolds (possibly with boundary) with nonzero Euler characteristic,
and integral homology spheres.
There exist constants $C,d$ such that any finite group $G$ acting
effectively and smoothly on $X$ has an abelian subgroup $A$ which
can be generated by $d$ elements and has index $[G:A]\leq C$.
Furthermore, $C$ and $d$ can be chosen to depend only on the dimension of $X$
and $H^*(X;\ZZ)$.
\end{theorem}

Although Theorem \ref{thm:main} does not immediately apply to odd dimensional real projective spaces,
combining the statement of Theorem \ref{thm:main} for spheres with the
the arguments in \cite[\S 2.3]{M0} we deduce:

\begin{corollary}
For any $n$ there is a constant $C$ such that any finite group acting
smoothly and effectively on $\RP^n$ has an abelian subgroup of index at most $C$.
\end{corollary}

The proof of Theorem \ref{thm:main} uses a result by Alexandre Turull and the author
\cite{MT} which relies on the classification of finite simple groups (see Section \ref{s:MT}).

In order to explain the context of the results in this paper it will be convenient to use
the following terminology, introduced by Popov \cite{Po0}:
a group $\Gamma$ is said to be Jordan if any finite subgroup $G\leq \Gamma$
has an abelian subgroup whose index in $G$ is bounded above by a constant depending only on $\Gamma$.

Theorem \ref{thm:main} gives a positive partial answer to the
question, posed by \'Etienne Ghys, of whether diffeomorphism
groups of compact manifolds are Jordan (see Question 13.1 in
\cite{F}\footnote{This question was discussed in several talks
by Ghys \cite{G}, but apparently it was in \cite{F} when it
appeared in print for the first time.}).
The particular case in which
$X$ is a sphere was also independently asked in several talks
by Walter Feit and later by Bruno Zimmermann \cite[\S 5]{Z}.

Other partial answers to Ghys's question have been given in the
past. It was been proved that $\Diff(X)$ is Jordan if $X$ is: a
compact manifold of dimension at most $2$ (this is an easy
exercise, see \cite[Theorem 1.3]{M0} for the case of surfaces),
a compact $3$-manifold (Zimmermann, \cite{Z2}), a compact
$4$-manifold with nonzero Euler characteristic \cite{M2}, or a
closed manifold with a nonzero top dimensional cohomology class
expressible as a product of one dimensional classes (e.g. tori)
\cite{M0}. One can also study Jordan's property for
diffeomorphism groups of open manifolds. It is easy to prove
that $\Diff(\RR)$ and $\Diff(\RR^2)$ are Jordan; the work of
Meeks and Yau on minimal surfaces \cite[Theorem 4]{MY} implies
that $\Diff(\RR^3)$ is Jordan; Guazzi, Mecchia and Zimmermann
later proved in \cite{GMZ} that $\Diff(\RR^3)$ and $\Diff(\RR^4)$ are
Jordan using much more elementary methods than those of
\cite{MY}, and Zimmermann proved in \cite{Z2} that
$\Diff(\RR^5)$ and $\Diff(\RR^6)$ are Jordan.

An example of connected open $4$-manifold whose diffeomorphism
group is not Jordan was  given by Popov in \cite{Po}. Csik\'os,
Pyber, and Szab\'o \cite{CPS} found the first example giving a
negative answer to to Ghys's question, showing that the
diffeomorphism group of $T^2\times S^2$ is not Jordan (but note
that, in contrast, the symplectomorphism group of any
symplectic form on $T^2\times S^2$ {\it is} Jordan, see
\cite{M5}). Many other examples of manifolds giving a negative
answer to Ghys's question can
be obtained using the ideas in \cite{CPS}: in particular, for
any manifold $M$ supporting an effective action of $\SU(2)$ or
$\SO(3,\RR)$ the diffeomorphism group of $T^2\times M$ is not
Jordan (see \cite{M6}). It is an intriguing question to
characterize which compact smooth manifolds have Jordan diffeomorphism
group.

\subsection{Jordan property for other automorphism groups}

Let $Z$ be a smooth real affine variety. By an automorphism of $Z$ we understand an automorphism
lifting the identity on $\Spec\RR$. In particular, automorphisms of $Z$ are diffeomorphisms.
Denote by $\Aut(Z)$ the group of automorphisms of $Z$.
Theorem \ref{thm:main} implies that
the group of algebraic automorphisms of acyclic real affine varieties
is Jordan. Combining Theorem \ref{thm:main} with
\cite[Theorem 4.1]{PR} we deduce the following more general result.

\begin{corollary}
\label{cor:affine}
Let $Z$ be a smooth real affine variety. If $Z$ has nonzero Euler characteristic then
$\Aut(Z)$ is Jordan.
\end{corollary}
\begin{proof}
If $Z$ is compact then we apply directly Theorem \ref{thm:main}. Suppose now that
$Z$ is not compact. Let $G\leq\Aut(Z)$ be a finite subgroup. By
\cite[Theorem 4.1]{PR} there exists a smooth compact manifold $X$ with boundary,
endowed with a smooth action of $G$, and a $G$-equivariant diffeomorphism $X\setminus\partial X\cong Z$.
Applying Theorem \ref{thm:main} to $X$ we deduce the existence of an abelian subgroup $A\leq G$
satisfying $[G:A]\leq C$ for some constant $C$ which only depends on the dimension of $X$ and
on $H^*(X;\ZZ)\simeq H^*(Z;\ZZ)$. In particular, $C$ can be chosen as a function of $Z$, and does not
depend on the compactification $X$. This shows that $\Aut(Z)$ is Jordan.
\end{proof}

The question of whether automorphism groups of a given geometric structure is Jordan
has been asked in other contexts besides smooth and smooth real affine varieties.

Serre proved that the classical Cremona group is Jordan
\cite[Theorem 3.1]{S3}, \cite[Theorem 5.3]{S2},
and asked whether higher dimensional
Cremona groups are Jordan \cite[Question 6.1]{S2}. Popov extended the question
to birational transformation groups $\Bir(X)$ and automorphism groups $\Aut(X)$
of general algebraic varieties $X$ (see \cite{Po2} for a survey). Prokhorov and Shramov proved
\cite[Theorem 1.8]{PS} that if $X$ is non-uniruled then $\Bir(X)$ is Jordan and
a recent result of Birkar \cite{Bir} combined with
\cite[Theorem 1.8]{PS} implies that if the irregularity of $X$ is zero then
$\Bir(X)$ is Jordan (in particular the higher Cremona groups
$\Bir(\CP^n)$ are Jordan for every $n$).
Bandman and Zarhin have also recently contributed to this circle of questions
\cite{BZ1,BZ2,BZ3}.
Meng and Zhang proved that the automorphism groups of complex projective manifolds
are always Jordan \cite{MZ} (see \cite{Po3} for some simplifications of their arguments).

Automorphism groups of compact complex surfaces have been
proved to be Jordan by Prokhorov--Shramov \cite{PS}. A recent paper of Popov
\cite{Po3} considers compact complex spaces of arbitrary dimension and gives
some partial results. Automorphism
groups of hyperbolic complex manifolds in particular are also studied in \cite{Po3}.

Ye has proved \cite{Ye} that the group of homeomorphisms of compact
flat manifolds is Jordan. It is an intriguing question whether there exists
a (closed or not) smooth manifold with Jordan diffeomorphism group but with non Jordan homeomorphism
group.

Finally, the Jordan property has also been
explored in symplectic geometry: the author proved that the
Hamiltonian diffeomorphism groups of compact symplectic
manifolds are Jordan, and gave some partial results on full
symplectomorphism groups \cite{M5,M7}.

\subsection{Linear vs. nonlinear actions}
A natural question when considering nonlinear analogues of results
on linear actions is whether for a given $n$ there are finite
groups admitting smooth effective actions on $S^n$ (or more
generally homology $n$-spheres) which are not isomorphic to a
finite subgroup of $\O(n+1,\RR)$. In dimension $3$ the
situation is well understood. Any finite group acting smoothly
and effectively on $S^3$ is isomorphic to a subgroup of
$\SO(4,\RR)$, and in fact any smooth effective action is
conjugate to a linear action, see \cite{DL}. In contrast, some
of the Milnor groups $Q(8a,b,c)$ admit smooth effective actions
on homology $3$-spheres, but admit no effective linear action on $S^3$,
see \cite[Remark 6.11]{DavMil}.
Any finite group acts freely on some rational homology $3$-sphere
\cite{CL}; from this perspective, it would be quite interesting
to extend Theorem \ref{thm:main} to rational homology spheres
(\cite{CL} implies that if their diffeomorphism groups are Jordan then the Jordan
constant can not possibly depend only on the dimension). In dimension $4$ much less is known. Chen, Kwasik
and Schultz \cite{CKS} have recently proved that any finite
group acting smoothly, effectively and orientation preservingly
on $S^4$ is isomorphic to a subgroup of $\SO(5,\RR)$; however,
the analogous question with the orientation preserving
condition removed remains open (but not in the topological
category, see \cite{CKS}).

It is generaly believed that for some $n$ there should exist finite groups,
not isomorphic to any finite subgroup of $\SO(n+1,\RR)$, and admitting
smooth effective and orientation preserving actions on $S^n$ (this is implicit in
the statement of Problem 3 in \cite{CKS}), and very likely there should exist infinitely
many $n$'s for which this happens; similarly, the analogous question for arbitrary
smooth and effective actions, replacing $\SO(n+1,\RR)$ by $\O(n,\RR)$, should admit
examples for infinitely many $n$ (none of such examples can be $p$-groups
by \cite{D0,DH}). However, the author has not found any result along these
lines in the literature (but see some partial results in the opposite direction in
\cite{Z-1,Z-2}).

Some analogues of this question are much better understood.
On the one hand, in the topological category Zimmermann has
recently constructed examples in all dimensions bigger than $5$, see \cite{Z2016}.
On the other hand, it is known that there are infinitely many $n$ for which there is a finite
group $G$ admitting a smooth free action on $S^n$ but no free linear action on the
same sphere (see for example \cite{Pe0}).

One may also wonder whether there exist finite groups acting smoothly and
effectively on $\RR^n$ but admitting no linear effective action on $\RR^n$.
The first nontrivial case, as in the case of spheres, is $n=3$.
The work of Meeks and Yau on minimal surfaces \cite[Theorem 4]{MY} implies that no such groups
exist if $n=3$.
Using much more elementary methods,
Guazzi, Mecchia and Zimmermann proved in \cite{GMZ} that if $n=3$ or $4$ then any finite
group acting smoothly and effectively on a contractible $n$-dimensional manifold is
isomorphic to a subgroup of $\O(n,\RR)$. In general, although one should expect the question
above to have a positive answer for infinitely many values of $n$, no examples seem to
be known.

\subsection{Contents and notation}
Section \ref{s:preliminaries} contains some preliminary results.
In Section \ref{s:MT} we recall the main result in \cite{MT} and prove a slight
strengthening of it. In Section \ref{s:bundles} we consider the situation of an
action of a finite group on a manifold preserving a submanifold on which the
induced action is abelian, and prove a crucial lemma.
The proof of Theorem \ref{thm:main} is given in the last two sections of the paper.
In Section \ref{s:RX} we treat the cases of open acyclic manifolds (Theorem \ref{thm:R-n})
and compact manifolds with nonzero Euler characteristic (Theorem \ref{thm:non-zero-Euler}),
and in Section \ref{s:S} we treat the case of integral homology spheres
(Theorem \ref{thm:spheres}).

For any prime $p$ we denote by $\FF_p$ the field of $p$
elements.

In this paper manifolds or submanifolds need not be connected and may have
connected components of different dimensions. If $X$ is a manifold and $x\in X$
we denote by $\dim_x X$ the dimension of the connected component of $X$ containing
$x$.

\subsection{Remark}
This paper is a substantial revision and slight expansion of the
preprint {\tt arXiv:1403.0383v2}. Version {\tt v3} of that preprint contained
some applications of Theorem \ref{thm:main} on fixed points
which will finally be the content of a separate paper \cite{M8}. When
preparing the present version we included in the introduction a
few references to results on Jordan property that have appeared
after {\tt arXiv:1403.0383v2} was posted, some of which refer
to this paper. We hope these circular citations (which do not include
circular arguments!) will cause no confusion.

\subsection{Acknowledgements}
I am very pleased to acknowledge my indebtedness to Alexandre
Turull. It's thanks to him that Theorem \ref{thm:main}, which
in earlier versions of this paper referred only to finite
solvable groups, has become a theorem on arbitrary finite
groups. I wish also to thank \'Etienne Ghys, Ian Hambleton,
Vladimir Popov and Bruno Zimmermann for useful conversations
and e-mail exchanges. Thanks also to the referee for providing
the reference \cite{Ye2} and for some corrections.

\section{Preliminaries}

\label{s:preliminaries}

\subsection{Local linearization of smooth finite group actions}

The following result is well known. We recall it because of its crucial role
in some of the arguments of this paper. Statement (1)
implies that the fixed point set of a smooth finite group action on a manifold with boundary
is a neat submanifold in the sense of \S 1.4 in \cite{H}.

\begin{lemma}
\label{lemma:linearization}
Let a finite group $\Gamma$ act smoothly on a manifold $X$, and let $x\in X^{\Gamma}$.
The tangent space $T_xX$ carries a linear action of $\Gamma$, defined as the derivative
at $x$ of the action on $X$, satisfying the following properties.
\begin{enumerate}
\item There exist neighborhoods $U\subset T_xX$ and $V\subset X$, of $0\in T_xX$ and $x\in X$ respectively, such that:
    \begin{enumerate}
    \item if $x\notin\partial X$ then there is a $\Gamma$-equivariant diffeomorphism $\phi\colon U\to V$;
    \item if $x\in\partial X$ then there is $\Gamma$-equivariant diffeomorphism $\phi\colon U\cap \{\xi\geq 0\}\to V$, where $\xi$ is a nonzero
        $\Gamma$-invariant element of $(T_xX)^*$ such that $\Ker\xi=T_x\partial X$.
    \end{enumerate}
\item If the action of $\Gamma$ is effective and $X$ is connected then the action of
$\Gamma$ on $T_xX$ is effective, so it induces an inclusion $\Gamma\hookrightarrow\GL(T_xX)$.
\item \label{item:inclusio-propia-subvarietats-fixes}
If $\Gamma'\vt \Gamma$ and $\dim_xX^{\Gamma}<\dim_xX^{\Gamma'}$ then
there exists an irreducible $\Gamma$-submodule $V\subset T_xX$
on which the action of $\Gamma$ is nontrivial but the action of $\Gamma'$ is trivial.
\end{enumerate}
\end{lemma}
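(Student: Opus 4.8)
The plan is to reduce the statement to linear algebra via part (1) of the lemma and then exploit the normality of $\Gamma'$ to produce the submodule directly. Write $W=T_xX$, regarded as a real representation of $\Gamma$ (and, by restriction, of $\Gamma'$). First I would observe that the $\Gamma$-equivariant diffeomorphism $\phi$ of part (1) identifies a neighborhood of $x$ in $X^{\Gamma}$ with a neighborhood of $0$ in $W^{\Gamma}$; in the boundary case it identifies it with the intersection of such a neighborhood with the half-space $\{\xi\geq 0\}$, which does not alter the dimension since $\xi$ is $\Gamma$-invariant. The same applies with $\Gamma'$ in place of $\Gamma$. Hence $\dim_xX^{\Gamma}=\dim W^{\Gamma}$ and $\dim_xX^{\Gamma'}=\dim W^{\Gamma'}$, so the hypothesis becomes the strict inequality $\dim W^{\Gamma}<\dim W^{\Gamma'}$. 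Since $\Gamma'\subseteq\Gamma$ we always have $W^{\Gamma}\subseteq W^{\Gamma'}$, and the inequality upgrades this to a strict inclusion.

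The key step is to note that because $\Gamma'\vt\Gamma$ is normal, the fixed subspace $W^{\Gamma'}$ is $\Gamma$-invariant: if $w\in W^{\Gamma'}$ and $g\in\Gamma$, then for every $g'\in\Gamma'$ one has $g'\cdot(g\cdot w)=g\cdot((g^{-1}g'g)\cdot w)=g\cdot w$, using that $g^{-1}g'g\in\Gamma'$. Thus $\Gamma$ acts linearly on $W^{\Gamma'}$, and by construction $\Gamma'$ acts trivially there, so the action factors through $\Gamma/\Gamma'$. Moreover the fixed subspace of this $\Gamma$-action on $W^{\Gamma'}$ is exactly $W^{\Gamma}$; since $W^{\Gamma}\subsetneq W^{\Gamma'}$, the action of $\Gamma$ on $W^{\Gamma'}$ is nontrivial.

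Finally, I would decompose $W^{\Gamma'}$ into irreducible $\Gamma$-submodules, which is possible by Maschke's theorem since $\Gamma$ is finite and we work over $\RR$. If every summand carried the trivial $\Gamma$-action, then all of $W^{\Gamma'}$ would be $\Gamma$-fixed, forcing $W^{\Gamma'}=W^{\Gamma}$, a contradiction. Hence at least one irreducible summand $V\subseteq W^{\Gamma'}\subseteq T_xX$ carries a nontrivial $\Gamma$-action; and since $V\subseteq W^{\Gamma'}$, the action of $\Gamma'$ on $V$ is trivial. This $V$ is the required submodule. The only place demanding any care is the translation in the first paragraph between the local dimension of the fixed-point submanifold and the dimension of the linear fixed subspace, particularly at boundary points, where the $\Gamma$-invariance of $\xi$ guarantees that passing to the half-space leaves dimensions unchanged; the remaining steps are elementary representation theory, with the normality of $\Gamma'$ furnishing the crucial structural input that makes $W^{\Gamma'}$ a $\Gamma$-submodule.
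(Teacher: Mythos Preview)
Your argument for part~(3) is correct and follows essentially the same route as the paper: reduce via part~(1) to the linear fixed subspaces $W^{\Gamma}\subsetneq W^{\Gamma'}$, use normality of $\Gamma'$ to see that $W^{\Gamma'}$ is a $\Gamma$-submodule, and extract a nontrivial irreducible summand. The only cosmetic difference is that the paper, having already built a $\Gamma$-invariant Riemannian metric in the proof of part~(1), phrases the last step as taking the orthogonal complement of $T_xX^{\Gamma}$ inside $T_xX^{\Gamma'}$ and picking any irreducible factor there, whereas you invoke Maschke's theorem directly; these are the same argument in different clothing.
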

\begin{proof}
We first construct a $\Gamma$-invariant Riemannian metric $g$ on $X$ with respect to which $\partial X\subset X$ is totally geodesic. Take any
tangent vector field on a neighborhood of $\partial X$ whose restriction
to $\partial X$ points inward; averaging over the action of $\Gamma$, we get a
$\Gamma$-invariant vector field which still points inward, and its flow at short time
defines an embedding $\psi:\partial X\times[0,\epsilon)\to X$ for some small $\epsilon>0$
such that $\psi(x,0)=x$ and
$\psi(\gamma\cdot x,t)=\gamma\cdot \psi(x,t)$ for any $x\in \partial X$ and $t\in
[0,\epsilon)$. Let $h$ be a $\Gamma$-invariant Riemannian metric on $\partial X$ and consider any Riemannian metric on $X$ whose restriction to $\psi(\partial X\times[0,\epsilon/2])$
is equal to $h+dt^2$. Averaging this metric over the action of $\Gamma$ we obtain a metric $g$
with the desired property. The exponential map with respect to $g$
gives the local diffeomorphism in (1). To prove (2), assume that the action of
$\Gamma$ on $X$ is effective. (1) implies that for any subgroup
$\Gamma'\leq \Gamma$ the fixed point set
$X^{\Gamma'}$ is a submanifold of $X$ and that $\dim_xX=\dim (T_xX)^{\Gamma'}$
for any $x\in X^{\Gamma'}$; furthermore, $X^{\Gamma'}$ is closed by the continuity
of the action. So if some element $\gamma\in\Gamma$
acts trivially on $T_xX$, then $X^{\gamma}$ is a closed
submanifold of $X$ satisfying $\dim_xX^{\gamma}=\dim X$. Since $X$ is connected this implies $X^{\gamma}=X$,
so $\gamma=1$, because the action of $\Gamma$ on $X$ is effective. Finally, (3)
follows from (1) ($V$ can be defined as any of the irreducible factors in the $\Gamma$-module
given by the perpendicular of $T_xX^{\Gamma}$ in $T_xX^{\Gamma'}$).
\end{proof}

\subsection{Points with big stabilizer for actions of $p$-groups on compact manifolds}

The following lemma will be used in the proof that compact connected manifolds with
nonzero Euler characteristic have Jordan diffeomorphism group. The result is
actually true in the wider context of continuous actions of finite $p$-groups
on compact topological manifolds (for example it follows from \cite[Theorem 2.5]{Ye2})
but the smooth case admits a simpler
and basically self contained proof, which we give below for completeness.

\begin{lemma}
\label{lemma:one-big-stabiliser}
Let $Y$ be a compact smooth
manifold, possibly with boundary, satisfying $\chi(Y)\neq 0$. Let $p$ be a prime,
and let $G$ be a finite $p$-group acting smoothly on $Y$. Let $r$ be
the biggest nonnegative integer such that $p^r$ divides $\chi(X)$.
There exists some $y\in Y$ whose stabilizer $G_y$ satisfies $[G:G_y]\leq p^r$.
\end{lemma}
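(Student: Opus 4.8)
The plan is to pick a $G$-good triangulation $(\cC,\phi)$ of $Y$, whose existence is guaranteed by the discussion preceding the statement, and then to compute $\chi(Y)$ by summing $(-1)^{\dim\sigma}$ over the $G$-orbits of simplices of $\cC$. The whole point of insisting on a \emph{good} action is that $G$ permutes the simplices of each fixed dimension, and that the stabilizer of a simplex fixes it pointwise; this lets me read off point stabilizers from simplex stabilizers and reduces the computation of $\chi(Y)$ to an exercise in counting $G$-orbits, to which the standard mod-$p$ divisibility for $p$-group actions applies. Note that the hypothesis $\chi(Y)\neq 0$ is exactly what makes $r$ finite and the argument meaningful.

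First I would record the key identification: for any simplex $\sigma$ of $\cC$ and any point $y$ lying in the interior of $|\sigma|$, one has $G_y=G_\sigma$. Indeed, if $g\in G_\sigma$ then goodness forces $g$ to act as the identity on $|\sigma|$, so $g\in G_y$; conversely, if $g\cdot y=y$, then $y$ lies in the interiors of both $|\sigma|$ and $g\cdot|\sigma|$, and since two open simplices of a simplicial complex are either disjoint or equal we get $g\cdot\sigma=\sigma$, i.e.\ $g\in G_\sigma$. In particular the set of point stabilizers occurring on interiors of simplices is exactly $\{G_\sigma\}_\sigma$, and, the action being good, all simplices in one $G$-orbit share a common dimension.

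Next I would choose one representative $\sigma_{\mathcal O}$ for each orbit $\mathcal O$ of simplices and write
\begin{equation*}
\chi(Y)=\sum_{\sigma}(-1)^{\dim\sigma}=\sum_{\mathcal O}(-1)^{\dim\sigma_{\mathcal O}}\,[G:G_{\sigma_{\mathcal O}}],
\end{equation*}
since each orbit $\mathcal O$ consists of $[G:G_{\sigma_{\mathcal O}}]$ simplices, all of dimension $\dim\sigma_{\mathcal O}$. As $G$ is a $p$-group, every index $[G:G_{\sigma_{\mathcal O}}]$ is a power of $p$. If every simplex $\sigma$ satisfied $[G:G_\sigma]\geq p^{r+1}$, then each term on the right would be divisible by $p^{r+1}$, whence $p^{r+1}\mid\chi(Y)$, contradicting the maximality of $r$. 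Therefore some simplex $\sigma$ has $[G:G_\sigma]\leq p^{r}$, and any $y$ in the interior of $|\sigma|$ satisfies $G_y=G_\sigma$, so $[G:G_y]\leq p^{r}$, as desired.

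The only genuinely nontrivial inputs are the existence of a $G$-good triangulation and the resulting orbit decomposition of $\chi(Y)$; both are supplied by the preliminary discussion and by the good-action formalism of \cite[Chap.\ III]{Br}. Consequently I expect the main obstacle to be purely bookkeeping: making sure that dimension is genuinely constant along $G$-orbits, and that the identification $G_y=G_\sigma$ is stated carefully enough to justify passing from the combinatorial simplex stabilizers to the honest isotropy groups $G_y$ of points of the manifold.
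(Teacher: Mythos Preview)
Your proposal is correct and follows essentially the same approach as the paper: take a $G$-good triangulation, observe that orbit sizes are powers of $p$, and argue that if all orbits had size divisible by $p^{r+1}$ then so would $\chi(Y)$, contradicting the definition of $r$. The only difference is that you prove $G_y=G_\sigma$ for $y$ in the open simplex, while the paper only uses the inclusion $G_\sigma\subseteq G_y$ (which already gives $[G:G_y]\leq[G:G_\sigma]\leq p^r$); your extra care is harmless but not needed.
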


Before explaining the proof we recall some basic facts on equivariant
triangulations.

Let $G$ be a
group and let $\cC$ be a simplicial complex
endowed with an action of $G$. We say that this action is good  if for
any $g\in G$ and any $\sigma\in \cC$ such that $g(\sigma)=\sigma$
we have $g(\sigma')=\sigma'$ for any subsimplex $\sigma'\subseteq\sigma$
(equivalently, the restriction of the action of $g$ to $|\sigma|\subset|\cC|$ is the identity).
This property is called condition (A) in \cite[Chap. III, \S 1]{Br}.
If $\cC$ is a simplicial complex and $G$ acts on
$\cC$, then the induced action of $G$ on the barycentric subdivision
$\sd \cC$ is good (see \cite[Chap. III, Proposition 1.1]{Br}).

Suppose that $G$ acts on a compact manifold $Y$, possibly with boundary.
A $G$-good triangulation of $Y$ is a
pair $(\cC,\phi)$, where $\cC$ is a finite simplicial complex endowed with a
good action of $G$ and $\phi\colon Y\to |\cC|$ is a $G$-equivariant homeomorphism.
For any smooth action of a finite group $G$ on a manifold $X$ there exist
$G$-good triangulations of $X$ (by the previous comments it suffices to prove
the existence of a $G$-equivariant triangulation; this can be easily obtained
adapting the construction of triangulations of smooth manifolds given in \cite{C}
to the finitely equivariant setting; for much more detailed results, see \cite{I}).

We are now ready to prove Lemma \ref{lemma:one-big-stabiliser}.
Let $(\cC,\phi)$ be a $G$-good triangulation of $Y$.
The cardinal of each of the orbits of $G$ acting on $\cC$ is a power of $p$.
If the cardinal of all orbits were divisible by $p^{r+1}$, then
for each $d$ the cardinal of the set of $d$-dimensional simplices in $\cC$
would be divisible by $p^{r+1}$, and consequently
$\chi(Y)=\chi(\cC)$ would also be divisible by $p^{r+1}$, contradicting
the definition of $r$. Hence, there must be at least one simplex $\sigma\in \cC$
whose orbit has at most $p^r$ elements. This means that the stabilizer $G_{\sigma}$ of
$\sigma$ has index at most $p^r$. If $y\in Y$ is a point such that
$\phi(y)\in |\sigma|\subseteq |\cC|$,
then $y$ is fixed by $G_{\sigma}$, because the triangulation is $G$-good.

\subsection{Fixed point loci of actions of abelian $p$-groups}
\label{s:fixed-point-loci-p-groups}

The following lemma is standard, see e.g. \cite[Lemma 5.1]{M7} for the proof.

\begin{lemma}
\label{lemma:cohom-no-augmenta}
Let $X$ be a manifold, let $p$ be a prime, and
let $G$ be a finite $p$-group acting continuously on $X$.
We have
$$\sum_j b_j(X^G;\FF_p)\leq \sum_j b_j(X;\FF_p).$$
\end{lemma}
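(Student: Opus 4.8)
The plan is to prove this via the standard Smith theory inequality for $p$-group actions, which I would approach by reducing to the case of an elementary abelian $p$-group and then to a cyclic group of order $p$, where the result is classical.

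First I would note that it suffices to treat the case $G=(\ZZ_p)^k$ an elementary abelian $p$-group. Indeed, for a general finite $p$-group $G$, the Frattini subgroup $\Phi(G)$ has elementary abelian quotient, and one can proceed by induction on $|G|$: if $G$ is nontrivial, pick a central subgroup $Z\cong\ZZ_p$ (any nontrivial $p$-group has nontrivial center, which contains an element of order $p$), and observe that $G/Z$ acts on $X^Z$ with $(X^Z)^{G/Z}=X^G$. By induction applied to the $G/Z$-action on $X^Z$, we get $\sum_j b_j(X^G;\FF_p)\leq\sum_j b_j(X^Z;\FF_p)$, so the whole problem reduces to the case $G=\ZZ_p$ cyclic of prime order.

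For $G=\ZZ_p$ acting on $X$, I would invoke the fundamental Smith inequality from the theory of finite transformation groups. The cleanest route is to use $\FF_p$-Smith theory directly: associated to the $\ZZ_p$-action one has the Smith special homology sequences, built from the chain-level operators $\sigma=1+t+\dots+t^{p-1}$ and $\tau=t-1$ (where $t$ generates $\ZZ_p$) acting on the $\FF_p$-chain complex of a $G$-CW or equivariantly triangulated model of $X$. The key output of Smith theory is precisely the inequality
$$\sum_j b_j(X^G;\FF_p)\leq\sum_j b_j(X;\FF_p),$$
which is stated for instance in Bredon \cite{Br} (Chap. III). I would present this as the essential ingredient, citing the good triangulations already discussed in the excerpt to ensure $X$ (or each relevant compact piece) admits a suitable $G$-CW structure; for noncompact $X$ one works with $\FF_p$-cohomology with the usual finiteness understood, or exhausts by invariant compact subsets.

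The main obstacle I anticipate is purely bookkeeping rather than conceptual: making sure the induction via a central $\ZZ_p$ is valid, i.e. that $X^Z$ is a reasonable enough space (a manifold, by Lemma \ref{lemma:linearization} in the smooth case, or at least a space to which Smith theory applies) and that its $\FF_p$-Betti numbers are finite so the inequalities chain together. Since the statement only assumes a continuous action on a general manifold $X$, I would be careful to either assume the Betti numbers on the right are finite (otherwise the inequality is vacuous) or to phrase everything in terms of the relevant Smith-theoretic cohomology; granting that, the reduction to $G=\ZZ_p$ and the classical Smith inequality close the argument immediately.
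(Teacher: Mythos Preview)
Your proposal is correct and follows essentially the same approach as the paper: the paper's proof cites \cite[Theorem III.4.3]{B} for the case $|G|=p$, and for general $G$ proceeds by induction on $|G|$ via a central subgroup of order $p$, applying the inductive hypothesis to the action of $G/G_0$ on $X^{G_0}$. Your detour through the Frattini subgroup is unnecessary (you don't actually use it, since the central-$\ZZ_p$ induction already handles arbitrary $p$-groups), but otherwise your argument matches the paper's exactly.
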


\section{Testing Jordan's property on $\{p,q\}$-groups}
\label{s:MT}
Suppose that $\cC$ is a set of finite groups. We denote by
$\tT(\cC)$ the set of all $T \in \cC$ such that
there exist primes $p$ and $q$, a Sylow $p$-subgroup $P$ of $T$ (which might
be trivial),
and a normal Sylow $q$-subgroup $Q$ of $T$, such that $T = PQ$.
(In particular, if $T\in \tT(\cC)$ then
$|T|=p^{\alpha}q^{\beta}$
for some primes $p$ and $q$ and nonnegative integers $\alpha,\beta$.)

Let $C$ and $d$ be positive integers.
We say that a set of groups $\cC$ satisfies (the Jordan property) $\jJ(C,d)$
if each $G\in\cC$ has an abelian subgroup $A$ such that $[G:A]\leq C$ and
$A$ can be generated by $d$ elements.
For convenience, we will say that $\cC$ satisfies the Jordan property, without
specifying any constants, whenever there exist some $C$ and $d$ such that
$\cC$ satisfies $\jJ(C,d)$.

The following is the main result in \cite{MT}:

\begin{theorem}
\label{thm:TM}
Let $d$ and $M$ be positive integers.
Let $\cC$ be a set of finite groups which is closed under taking subgroups
and such that $\tT(\cC)$ satisfies $\jJ(M,d)$.
Then there exists a positive integer $C_0$ such that
$\cC$ satisfies $\jJ(C_0,d)$.
\end{theorem}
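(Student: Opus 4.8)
The plan is to prove a single quantitative local-to-global statement and then apply it to each $G\in\cC$. Precisely, I would show that there is a function $C_0=C_0(M,d)$ with the property that whenever every subgroup $T\le G$ of the form $T=PQ$ --- with $Q\vartriangleleft T$ a normal Sylow $q$-subgroup and $P$ a Sylow $p$-subgroup --- has a $d$-generated abelian subgroup of index $\le M$, then $G$ itself has a $d$-generated abelian subgroup of index $\le C_0$. Since every such $T$ lies in $\tT(\cC)$, this gives $\jJ(C_0,d)$ at once. Two consequences of the hypothesis are immediate and used throughout: taking $P$ trivial shows every Sylow subgroup of $G$ has a $d$-generated abelian subgroup of index $\le M$, whence every elementary abelian $p$-subgroup of $G$ has rank at most $d+\log_2 M$, so $G$ has uniformly bounded $p$-rank for all $p$. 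The engine of the proof is the generalized Fitting subgroup $F^*(G)=F(G)E(G)$ together with its defining property $C_G(F^*(G))\le F^*(G)$.

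First I would bound the layer $E(G)$, the central product of the components $L_1,\dots,L_k$. Each $L_i$ is a quasisimple \emph{subnormal} subgroup, hence a genuine subgroup of $G$ and so a member of $\cC$. Using the classification of finite simple groups (CFSG) I would argue that a quasisimple group all of whose $\tT$-subgroups satisfy $\jJ(M,d)$ has bounded order: a large alternating group contains $(C_2)^{\lfloor n/2\rfloor}$, a $2$-group violating the rank bound; a group of Lie type of large rank contains a unipotent subgroup of large rank, again violating it; and over a large field the Frobenius-type subgroup $U\rtimes P_0$ built from a root subgroup $U$ (a $p$-group) and a nontrivial $\ell$-subgroup $P_0$ of a maximal torus, $\ell\ne p$, violates the index bound, since if every prime-power part of the torus order were $\le M$ the field would be bounded. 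The Schur multipliers of the finitely many surviving simple groups are bounded, so each $|L_i|$ is bounded. By Feit--Thompson each $L_i$ has even order; choosing an involution $t_i\in L_i$, the $t_i$ commute and are independent modulo the amalgamated centre, so $\langle t_1,\dots,t_k\rangle\cong(C_2)^k\le E(G)$ is a $2$-group in $\tT(\cC)$, forcing $k\le d+\log_2 M$. Hence $|E(G)|$ is bounded. Setting $K=C_G(E(G))\vartriangleleft G$ we obtain $[G:K]\le|\Aut(E(G))|$ bounded, while $K$ has no components, so $F^*(K)=F(K)$ is nilpotent and $C_K(F(K))\le F(K)$.

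It therefore remains to handle a \emph{constrained} group $K$, where $K/Z(F(K))$ embeds into $\Aut(F(K))$ and the nonabelian composition factors of $K$ all arise from the induced linear action of $K$ on the chief factors inside $F(K)=\prod_p O_p(K)$. Here CFSG enters a second time: a chief factor is an $\FF_p$-module $V$, and the image of $K$ in $\GL(V)$ contains, for large $\dim V$ or large field, subgroups (upper unitriangular $p$-groups, Frobenius groups from the torus) that are visible inside honest $\tT$-subgroups $V\rtimes Q_1$ of $K$, obtained as Schur--Zassenhaus complements in preimages of $q$-subgroups acting on $V$; these violate $\jJ(M,d)$ unless the dimensions and fields of the chief factors are bounded. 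The delicate point here is that a composition factor need not itself be a subgroup of $K$, so translating its largeness into the largeness of a concrete $\tT$-subgroup $V\rtimes Q_1$ is where most of the case-by-case CFSG work lives.

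The main obstacle is the final assembly, because the Fitting subgroup $F(K)$ is \emph{not} bounded: arbitrarily large abelian $p$-groups of rank $\le d$ are allowed, so one cannot simply appeal to a bounded ambient linear group. Worse, since $\cC$ is closed only under \emph{subgroups} and not quotients, there is no freedom to induct by passing to $K/O_p(K)$, and the required $d$-generated abelian subgroup of bounded index must be built by hand. The strategy I would pursue is to analyse, prime by prime, the coprime action of a Sylow system of $K/F(K)$ on the chief factors of $F(K)$: each such interaction is encoded in a $\tT$-subgroup $V\rtimes Q_1$, and $\jJ(M,d)$ bounds how a $q$-group can act on an abelian $p$-group with few fixed points, forcing large common fixed subspaces and hence a large abelian subgroup of $F(K)$ normalized by much of $K$. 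Patching these prime-by-prime conclusions into a single $d$-generated abelian subgroup of $K$ of index bounded purely in terms of $M$ and $d$ --- and then recombining it with the bounded pieces $E(G)$ and $G/K$ --- is the technical heart, and it is exactly here that the special shape of the groups in $\tT(\cC)$, two primes with the Sylow $q$-subgroup normal, is exploited. Keeping every constant independent of $G$ through this reassembly is what makes the uniform bound $C_0$ possible.
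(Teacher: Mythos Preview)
The present paper does not prove this theorem: it is quoted as the main result of \cite{MT} and used as a black box, the only additional information being that the proof in \cite{MT} relies on the classification of finite simple groups. There is therefore no in-paper argument to compare your proposal against.

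As a sketch, your outline is a reasonable architecture for such a result --- reduce via the generalized Fitting subgroup $F^*(G)=F(G)E(G)$, bound the layer $E(G)$ using CFSG and the uniform rank bound, pass to a constrained group $K$, and then analyse the coprime action of $K/F(K)$ on the chief factors of $F(K)$ through honest $\tT$-subgroups $V\rtimes Q_1$ --- and it is consistent with the fact that \cite{MT} invokes the classification. But what you have written is a roadmap, not a proof. The step you yourself flag as ``the technical heart'' --- patching the prime-by-prime control into a single $d$-generated abelian subgroup of bounded index in $K$, despite $F(K)$ being unbounded and $\cC$ being closed only under subgroups and not quotients --- is identified but not executed: you say what must happen without supplying any mechanism for why the local abelian pieces cohere, why the index stays bounded independently of the number of primes dividing $|F(K)|$, or how the recombination with $E(G)$ and $G/K$ is performed. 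Since that assembly is exactly where the content of \cite{MT} lives, your proposal locates the target but does not reach it.
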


We next prove a refinement of this theorem.
Given a set of finite groups $\cC$, we define $\aA(\cC)$ exactly like
$\tT(\cC)$ but imposing additionally that the Sylow subgroups are abelian
(in particular, $\aA(\cC)\subseteq\tT(\cC)$).
So a group $G\in\cC$ belongs to $\aA(\cC)$ if and only
if there exist primes $p$ and $q$, an abelian Sylow $p$-subgroup $P\leq G$
and a normal abelian Sylow $q$-subgroup $Q\leq G$, such that $G = PQ$.
Denote by $\pP(\cC)$ the set of groups
$G\in\cC$ with the property that there exists a prime $p$ such that $G$ is a $p$-group.

\begin{corollary}
\label{cor:TM-abelian}
Let $d$ and $M$ be positive integers.
Let $\cC$ be a set of finite groups which is closed under taking subgroups
and such that $\pP(\cC)\cup\aA(\cC)$ satisfies $\jJ(M,d)$.
Then there exists a positive integer $C_0$ such that
$\mathcal C$ satisfies $\jJ(C_0,d)$.
\end{corollary}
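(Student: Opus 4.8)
The plan is to deduce the corollary from Theorem \ref{thm:TM} by showing that the hypothesis on $\pP(\cC)\cup\aA(\cC)$ forces $\tT(\cC)$ to satisfy the Jordan property with the \emph{same} number of generators $d$. So I would fix $M,d$ as in the statement and produce a constant $M'$, depending only on $M$ and $d$, such that $\tT(\cC)$ satisfies $\jJ(M',d)$; the corollary then follows immediately by feeding this into Theorem \ref{thm:TM} (whose remaining hypothesis, that $\cC$ is closed under subgroups, is assumed). The number of generators is never increased along the way, so it stays equal to $d$.

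Let $T\in\tT(\cC)$, so $T=PQ$ with $P$ a Sylow $p$-subgroup and $Q\vt T$ a normal Sylow $q$-subgroup. If $p=q$ then $|T|=p^{\alpha+\beta}$, so $T$ is a $p$-group and hence $T\in\pP(\cC)$; the hypothesis then already supplies an abelian subgroup of index $\le M$ generated by $d$ elements, and we are done for this $T$. So assume $p\neq q$. Both $P$ and $Q$ belong to $\cC$ (which is subgroup-closed) and are a $p$- resp. $q$-group, hence $P,Q\in\pP(\cC)$; thus $P$ has an abelian subgroup $P'$ with $[P:P']\le M$ generated by $d$ elements, and $Q$ has an abelian subgroup of index $\le M$.

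The crucial point is to replace $Q$ by an abelian subgroup that is simultaneously normal in $T$. For this I would invoke the following purely group-theoretic fact (the main obstacle, discussed below): there is a function $b$ such that any finite group with an abelian subgroup of index $\le M$ has a \emph{characteristic} abelian subgroup of index $\le b(M)$. Applied to $Q$ it yields a characteristic abelian subgroup $Q'\subseteq Q$ with $[Q:Q']\le b(M)$; being characteristic in $Q$ and $Q\vt T$, the subgroup $Q'$ is normal in $T$. Set $H=P'Q'$. Since $Q'\vt T$, $P'$ normalizes $Q'$, so $H$ is a subgroup, and as $\gcd(|P'|,|Q'|)=1$ we have $|H|=|P'|\,|Q'|$; thus $P'$ is an abelian Sylow $p$-subgroup of $H$ and $Q'$ a normal abelian Sylow $q$-subgroup, i.e. $H\in\aA(\cC)$. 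Applying the hypothesis to $H$ gives an abelian subgroup $B\subseteq H\subseteq T$ generated by $d$ elements with $[H:B]\le M$. Since $[T:H]=[P:P']\,[Q:Q']\le M\,b(M)$, we obtain $[T:B]\le M^2 b(M)=:M'$, which proves that $\tT(\cC)$ satisfies $\jJ(M',d)$, as required.

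The main obstacle is the characteristic-subgroup fact, and it is where I expect the real work to lie. I would prove it by first reducing to $p$-groups: the Fitting subgroup $F(G)$ is characteristic and contains the core of the given abelian subgroup, so $[G:F(G)]$ is bounded in terms of $M$; writing $F(G)=\prod_p O_p(G)$ with each $O_p(G)$ characteristic, and bounding the number of primes for which $O_p(G)$ is non-abelian, it suffices to produce a characteristic abelian subgroup of bounded index in each $O_p(G)$ and take their (characteristic, abelian) product. For a $p$-group $Q$ with an abelian subgroup of index $\le M$, the order of $[Q,Q]$ is bounded in terms of $M$ (a classical consequence of having a bounded-index abelian subgroup); passing to the characteristic subgroup $C_Q([Q,Q])$, of bounded index because $Q/C_Q([Q,Q])$ embeds in $\Aut([Q,Q])$, reduces matters to nilpotency class $\le 2$, where the commutator pairing is an alternating form valued in a bounded group and a characteristic abelian subgroup of bounded index can be extracted by a rank argument on that form. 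Note that the coprimality $p\neq q$ built into the definition of $\tT$ is not actually exploited here: characteristic subgroups of $Q$ are automatically preserved by conjugation by $P$, which is all that the construction of $H$ requires.
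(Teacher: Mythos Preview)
Your overall strategy is exactly the paper's: reduce to showing $\tT(\cC)$ satisfies $\jJ(M',d)$ and then apply Theorem~\ref{thm:TM}; the case $p=q$ is disposed of via $\pP(\cC)$, and for $p\neq q$ one builds $H=P'Q'\in\aA(\cC)$ with $P'$ an abelian subgroup of $P$ and $Q'$ a characteristic abelian subgroup of $Q$ of bounded index, then applies the hypothesis once more to $H$. The difference lies entirely in how the characteristic abelian $Q'$ of bounded index is produced. You invoke a general structural fact (any finite group with an abelian subgroup of index $\le M$ has a characteristic abelian one of index $\le b(M)$) and sketch a reduction via $C_Q([Q,Q])$ and nilpotency class $\le 2$; this is plausible and the statement is true, but as you yourself say it is where the real work lies, and the sketch is not a proof. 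The paper instead exploits a piece of the hypothesis you left on the table: the abelian subgroup $Q_0\subseteq Q$ supplied by $\jJ(M,d)$ is \emph{generated by at most $d$ elements}. One simply sets $Q':=\bigcap_{\phi\in\Aut(Q)}\phi(Q_0)$, which is visibly characteristic and abelian; since $[Q_0:Q_0\cap\phi(Q_0)]\le[Q:\phi(Q_0)]=[Q:Q_0]\le M$ for every $\phi$, and since the $M!$-th powers of any $d$ generators of the abelian group $Q_0$ lie in every subgroup of $Q_0$ of index $\le M$, one gets $[Q_0:Q']\le (M!)^d$ and hence $[T:P_0Q']\le M^2(M!)^d$. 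So the $d$-generator bound, which you used only to count generators of the final abelian subgroup, is precisely what turns the characteristic-subgroup step into a two-line computation rather than a separate lemma.
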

\begin{proof}
Let $d$ and $M$ be positive integers, suppose that
$\cC$ is a set of finite groups which closed under taking subgroups,
and assume that $\pP(\cC)\cup\aA(\cC)$ satisfies $\jJ(M,d)$.
Let $C:=M^2(M!)^d$. We claim that $\tT(\cC)$ satisfies $\jJ(C,d)$.
This immediately implies our result, in view of Theorem \ref{thm:TM}.
Since $\pP(\cC)$ satisfies $\jJ(M,d)$,
to justify the claim it suffices to prove the following fact.
\begin{quote}
Let $G$ be a finite
group, let $p,q$ be distinct prime numbers, let $P\leq G$ be a $p$-Sylow subgroup,
let $Q\leq G$ be a normal $q$-Sylow subgroup, and assume that $G=PQ$; if there exist
abelian subgroups $P_0\leq P$ and $Q_0\leq Q$ such that
$[P:P_0]\leq M$, $[Q:Q_0]\leq M$, and $Q_0$ can be generated by $d$
elements, then there exists some $G'\in\aA(G)$ such that $[G:G']\leq C$.
\end{quote}
To prove this, define $Q':=\bigcap_{\phi\in\Aut(Q)}\phi(Q_0)$, where $\Aut(Q)$
denotes the group of automorphisms of $Q$. Clearly $Q'$ is an abelian characteristic subgroup
of $Q$, so it is normal in $G$ (because $Q$ is normal in $G$). Define $G':=P_0Q'$.
Then $G'\in\aA(G)$, so we only need to prove that $[G:G']\leq C$.

Suppose that $\{g_1,\dots,g_\delta\}$ is a generating set of $Q_0$
such that $\delta\leq d$ and
$Q_0\simeq\prod_j\la g_j\ra$, where $\la g_j\ra\leq Q_0$ is the
subgroup generated by $g_j$ (such generating set
exists because $Q_0$ is abelian and can be generated by $d$
elements). If
$\Gamma\leq Q_0$ is any subgroup of index at most $M$, then $g_j^{M!}$ belongs to $\Gamma$ for each $j$.
Consequently, the subgroup $Q''\leq Q_0$ generated by $\{g_1^{M!},\dots,g_{\delta}^{M!}\}$
is contained in any subgroup $\Gamma\leq Q_0$ of index at most $M$. In particular
$Q''\leq Q'$, because for any $\phi\in\Aut(Q)$ we have
$[Q_0:Q_0\cap\phi(Q_0)]\leq M$. On the other hand,
$[Q_0:Q'']\leq (M!)^{\delta}\leq (M!)^d$, so a fortiori
$[Q_0:Q']\leq (M!)^d$. Since $[G:G']=[P:P_0][Q:Q']=[P:P_0][Q:Q_0][Q_0:Q']$, the result follows.
\end{proof}

\section{Actions of finite groups on real vector bundles}
\label{s:bundles}

If $G$ is a (possibly infinite) group we denote by $\cC(G)$ the
set of finite subgroups of $G$ and we let $\aA(G):=\aA(\cC(G))$.

Define, for any smooth manifold $Y$, $$\aA(Y):=\aA(\Diff(Y)).$$
Suppose that $E\to Y$ be a smooth real vector
bundle. Denote by $\Diff(E\to Y)$ the group of
smooth bundle automorphisms lifting arbitrary diffeomorphisms of the base $Y$
(equivalently, diffeomorphisms of $E$ which map
fibers to fibers and whose restriction to each fiber is a linear map).
Denote by
\begin{equation}
\label{eq:map-pi}
\pi:\Diff(E\to Y)\to\Diff(Y)
\end{equation}
the map which assigns to each $\phi\in \Diff(E\to Y)$ the diffeomorphism
of $\psi\in\Diff(Y)$ such that $\phi(E_y)=E_{\psi(y)}$ for every $y$,
where $E_y$ is the fiber of $E$ over $y$.

Define also
$$\aA(E\to Y):=\aA(\Diff(E\to Y)).$$
Since the properties defining the groups in $\aA(\cC)$
in Section \ref{s:MT} are preserved by passing to quotients, for any $G\in\aA(E\to Y$)
we have $\pi(G)\in\aA(Y)$.

\begin{lemma}
\label{lemma:bundle-trick}
Assume that $Y$ is connected and
let $r$ be the rank of $E$. Suppose that $G\in\aA(E\to Y)$ and that
$\pi(G)\in\aA(Y)$ is abelian.
Then $G$ has an abelian subgroup $A\leq G$ satisfying $[G:A]\leq r!$.
\end{lemma}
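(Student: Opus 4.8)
The plan is to complexify the bundle and then argue by induction on the (complex) rank. Set $F=E\otimes_{\RR}\CC$, a complex vector bundle of complex rank $r$ over $Y$ on which $G$ acts $\CC$-linearly on fibres and covers the same diffeomorphisms of $Y$ as before, so that $\pi$ is unchanged. Let $K=G\cap\Ker\pi$ be the subgroup acting fibrewise; since $\pi(G)$ is abelian, $K\vt G$ and $G/K\cong\pi(G)$ is abelian. The statement I actually prove, by induction on $r$, is: if $G\in\aA(F\to Y)$ has $\pi(G)$ abelian and acts on a complex rank-$r$ bundle over a \emph{connected} base, then $G$ has an abelian subgroup of index at most $r!$; applied to $F=E\otimes\CC$ this is the lemma. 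Throughout I write $G=PQ$ for the decomposition coming from $G\in\aA$, with $P$ an abelian $p$-Sylow and $Q\vt G$ an abelian $q$-Sylow, and I set $Q_K=Q\cap K$, a normal abelian subgroup of $G$ acting fibrewise.

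The engine is an eigenbundle decomposition. Because every element of $Q_K$ has finite order, in a local trivialisation it is a base-continuous family of matrices with root-of-unity eigenvalues, so the simultaneous-eigenspace projections $P_\chi(y)=\frac{1}{|Q_K|}\sum_k\overline{\chi(k)}\,k|_{F_y}$ are continuous idempotents of locally constant rank; connectedness of $Y$ then gives a genuine decomposition $F=\bigoplus_{\chi\in S}F_\chi$ into $Q_K$-eigenbundles indexed by a finite $S\subseteq\widehat{Q_K}$, with $\sum_\chi \rk F_\chi=r$. Since $Q_K\vt G$, the group $G$ permutes the $F_\chi$, giving $G\to\operatorname{Sym}(S)$ with kernel $G_1$ (preserving every $F_\chi$) and $[G:G_1]\le|S|!$. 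When $|S|\ge 2$ each $d_\chi:=\rk F_\chi$ is $<r$, so for each $\chi$ the image $G_1^\chi$ of $G_1$ in $\Diff(F_\chi\to Y)$ lies in $\aA(F_\chi\to Y)$ (quotients preserve the $\aA$-structure) and has abelian base action, whence induction yields an abelian $A^\chi\subseteq G_1^\chi$ with $[G_1^\chi:A^\chi]\le d_\chi!$. The crucial point is that $G_1$ embeds faithfully into $\prod_\chi G_1^\chi$ (it preserves each block and $\bigoplus_\chi F_\chi=F$), so $B:=\{g\in G_1:\ g|_{F_\chi}\in A^\chi\text{ for all }\chi\}$ is genuinely \emph{abelian}: two of its elements commute on every block, hence on $F$. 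Finally $[G:B]\le|S|!\,\prod_\chi d_\chi!\le r!$, the last inequality being the elementary fact that the multinomial coefficient $\binom{r}{d_1,\dots,d_m}$ is at least $m!$ when $d_1+\dots+d_m=r$ with all $d_i\ge 1$.

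It remains to treat the case $|S|=1$, where $Q_K$ acts by scalars and the rank does not drop; this is where the genuine content lies. If $Q_K$ is scalar it is central in $G$, so the Schur--Zassenhaus splitting $K=Q_K\rtimes R$ (with $R$ an abelian $p$-group) is a \emph{direct} product $K=Q_K\times R$, making $K$ abelian; I then rerun the decomposition of the previous paragraph with the larger normal abelian fibrewise group $K$ in place of $Q_K$. The only surviving possibility is that $K$ too acts by scalars. Then $K$ is central and $G/K\cong\pi(G)$ is abelian, so $[G,G]\subseteq K$ is central; writing $[G,G]=[Q,P]\subseteq Q$ and invoking coprimality of $G=PQ$ — the $P$-module $[Q,P]$ has no nonzero fixed vectors, yet centrality makes $P$ act trivially on it — forces $[Q,P]=1$, so $G$ is abelian and the bound $1\le r!$ is trivial. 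I expect this scalar base case to be the main obstacle: it is the only step that uses $G\in\aA$ in an essential way (rather than the weaker fact that $G$ is metabelian), and making the coprime-action argument force commutativity is the crux. The secondary technical point to watch is the constancy of the eigenbundle ranks across $Y$, which is exactly where connectedness of the base is used.
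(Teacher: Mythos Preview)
Your argument is correct, but it takes a genuinely different route from the paper's. The paper does not induct on the rank: it performs the $Q_0$-eigenbundle decomposition once (with $Q_0=Q\cap\Ker\pi$, your $Q_K$), passes only to the subgroup $P'\subseteq P$ of index $\le r!$ that stabilises every block, and then argues directly that $A=P'Q$ is abelian. The point is that $P'$ already centralises $Q_0$ (which acts by scalars on each block it preserves), and since $\pi(G)$ is abelian one has $[P',Q]\subseteq Q_0$; hence the conjugation map $P'\to\Aut(Q)$ lands in $B=\{\phi\in\Aut(Q):\phi|_{Q_0}=\Id,\ \phi(\eta)\eta^{-1}\in Q_0\ \forall\eta\}$, which an elementary calculation shows to be a $q$-group, so the map from the $p$-group $P'$ is trivial. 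This is shorter and yields the abelian subgroup in the explicit shape $P'Q$. Your inductive scheme instead shrinks $G$ blockwise and reassembles abelian pieces via the multinomial inequality, reaching the analogous coprime-action statement ($C_{[Q,P]}(P)=1$ forcing $[Q,P]=1$) only in the scalar base case after a second pass with $K$ in place of $Q_K$. Both arguments ultimately exploit the same mechanism --- a $p$-group acting on an abelian $q$-group trivially on a layer and trivially modulo that layer must act trivially --- but the paper packages it as a single $q$-group computation rather than a recursion. One small point you should make explicit: applying induction to $G_1^{\chi}$ requires $G_1$ itself to be in $\aA$, which holds because $G_1\vt G$ (a normal subgroup of a group with abelian Sylow $p$- and normal abelian Sylow $q$-subgroup again has this form), but you asserted it only for quotients.
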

\begin{proof}
Take a group $G\in\aA(E\to Y)$ such that $\pi(G)$ is abelian. There exist two
distinct primes $p$ and $q$, a $p$-Sylow subgroup $P\leq G$, and a normal
$q$-Sylow subgroup $Q\leq G$, such that $G=PQ$. Furthermore, both $P$ and $Q$ are
abelian.
Let $Q_0=Q\cap\Ker\pi\leq Q$.
Then $Q_0$ is normal in $G$, since $Q_0=Q\cap (G\cap\Ker\pi)$
and both $Q$ and $G\cap\Ker\pi$ are normal in $G$.
So the action of $P$ on $G$ given by conjugation preserves both $Q$ and $Q_0$.
Furthermore, since $\pi(G)$ is abelian we have, for any $\gamma\in P$ and $\eta\in Q$,
$\pi(\gamma\eta\gamma^{-1}\eta^{-1})=\pi(\gamma)\pi(\eta)\pi(\gamma)^{-1}\pi(\eta)^{-1}=1$,
which is equivalent to $\gamma\eta\gamma^{-1}\eta^{-1}\in Q_0$.

The
complexified vector bundle $E\otimes\CC$ splits as a direct sum of subbundles
indexed by the
characters of $Q_0$,
\begin{equation}
\label{eq:descomp-E}
E\otimes\CC=\bigoplus_{\rho\in\Mor(Q_0,\CC^*)}E_{\rho},
\end{equation}
where $v\in E_{\rho}$ if and only if $\eta\cdot v=\rho(\eta)v$ for any $\eta\in Q_0$.
The action of $P$ on $E\otimes\CC$ permutes the summands $\{E_{\rho}\}$. In concrete
terms, if we define for any $\rho\in\Mor(Q_0,\CC^*)$ and $\gamma\in P$ the
character $\rho_{\gamma}\in\Mor(Q_0,\CC^*)$ by $\rho_{\gamma}(\eta)=\rho(\gamma^{-1}\eta\gamma)$ for any $\eta\in Q_0$, then we have
$\gamma\cdot E_{\rho}=E_{\rho_{\gamma}}$. Since there are at most $r$ nonzero summands
in (\ref{eq:descomp-E}) (because $Y$ is connected and the rank of $E$ is $r$),
the subgroup $P'\leq P$ consisting of those elements
which preserve each nonzero subbundle $E_{\rho}$ satisfies $[P:P']\leq r!$.
Furthermore, each element of $P'$ commutes with all the elements in $Q_0$,
because $P'$ acts linearly on $E\otimes\CC$ preserving the summands in
(\ref{eq:descomp-E}) and the action of $Q_0$
on each summand is given by homothecies (in particular, the action of
each element of $Q_0$ lifts the identity on $Y$).
Hence, the action of $P'$ on $Q$ given by conjugation gives a morphism
$$P'\to B:=\{\phi\in\Aut(Q)\mid \phi(\eta)=\eta\text{ for each $\eta\in Q_0$},
\quad \phi(\eta)\eta^{-1}\in Q_0\text{ for each $\eta\in Q$}\}.$$
We now prove that
$B$ is $q$-group. Let $\phi\in B$ be any element, and define a map $f:Q\to Q_0$
as $f(\eta)=\phi(\eta)\eta^{-1}$ for every $\eta$.
Since $Q$ is abelian and $\phi$ is a morphism of groups, $f$
is also a morphism of groups. Furthermore, $f(f(\eta))=1$ for every $\eta\in Q$,
because $Q_0\leq\Ker f$. Using induction it follows that $\phi^k(\eta)=f(\eta)^k\eta$
for every $k\in\NN$. Hence the order of $\phi\in B$ divides
$|Q_0|$, which is a power of $q$.

Since $P'$ is a $p$-group and $p\neq q$, any morphism
$P'\to B$ is trivial. This implies that
$P'$ commutes with $Q$. Setting $A:=P'Q$, the result follows.
\end{proof}

\begin{lemma}
\label{lemma:submfld-trick}
Suppose that $X$ is a smooth connected manifold and that $G\in\aA(X)$. Assume that
there is a $G$-invariant connected submanifold $Y\subseteq X$.
Let $G_Y\leq \Diff(Y)$ be the group
consisting of all diffeomorphisms of $Y$ which are induced by restricting
to $Y$ the action of the elements of $G$. Let $r:=\dim X-\dim Y$.
If $G_Y$ is abelian, then there is an abelian subgroup $A\leq G$ satisfying
$[G:A]\leq r!$.
\end{lemma}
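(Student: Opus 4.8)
The plan is to reduce the statement to Lemma \ref{lemma:bundle-trick} by replacing $X$ near $Y$ with the normal bundle of $Y$. Concretely, let $E:=(TX|_Y)/TY$ be the normal bundle of $Y$ in $X$, a smooth real vector bundle over $Y$ of rank $r=\dim X-\dim Y$. Since each $g\in G$ preserves the submanifold $Y$, its differential preserves $TY$ along $Y$ and hence descends to a linear automorphism of $E$ covering the diffeomorphism $g|_Y$ of $Y$. Letting $g$ vary, this defines a homomorphism $\Phi\colon G\to\Diff(E\to Y)$ whose composition with the projection $\pi$ of (\ref{eq:map-pi}) is exactly the restriction homomorphism $G\to G_Y$. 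In particular $\pi(\Phi(G))=G_Y$, which is abelian by hypothesis, so the hypotheses of Lemma \ref{lemma:bundle-trick} concerning the base are already met.

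The step I expect to be the main obstacle is checking that $\Phi$ is injective. Suppose $g\in\Ker\Phi$. Then $\pi(\Phi(g))=\operatorname{Id}_Y$, so $g$ fixes $Y$ pointwise; thus every $x\in Y$ lies in $X^g$ and $dg_x$ fixes $T_xY$ pointwise. Moreover $\Phi(g)$ is the identity on each fiber $E_x$, so $dg_x$ induces the identity on the quotient $T_xX/T_xY$. In a basis of $T_xX$ adapted to the filtration $T_xY\subset T_xX$ the endomorphism $dg_x$ is therefore unipotent; being also of finite order, it is diagonalizable over $\CC$ (its minimal polynomial divides some $t^n-1$, which has simple roots) with all eigenvalues equal to $1$, whence $dg_x=\operatorname{Id}$. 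Now the same reasoning as in the proof of Lemma \ref{lemma:linearization}(2) applies: since $g$ acts trivially on $T_xX$ at the fixed point $x$, statement (1) of that lemma shows $X^g$ is a closed submanifold with $\dim_xX^g=\dim X$, so $X^g=X$ by connectedness of $X$, and effectiveness of the $G$-action on $X$ forces $g=1$. Hence $\Phi$ is injective.

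With injectivity in hand the conclusion is immediate. Set $\bar G:=\Phi(G)$. As $\Phi$ is an isomorphism onto $\bar G$ and $G\in\aA(X)$, the copy $\bar G$ is a finite subgroup of $\Diff(E\to Y)$ of the required form, so $\bar G\in\aA(E\to Y)$ (alternatively, this follows directly since the defining properties of $\aA$ are preserved under quotients). Since $Y$ is connected, $E$ has rank $r$, and $\pi(\bar G)=G_Y$ is abelian, Lemma \ref{lemma:bundle-trick} yields an abelian subgroup $\bar A\subseteq\bar G$ with $[\bar G:\bar A]\leq r!$. Pulling back along the isomorphism $\Phi$, the subgroup $A:=\Phi^{-1}(\bar A)\subseteq G$ is abelian and satisfies $[G:A]=[\bar G:\bar A]\leq r!$, as required. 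Everything apart from the injectivity of $\Phi$ is a routine transfer of the problem into the vector-bundle setting where Lemma \ref{lemma:bundle-trick} does the real work.
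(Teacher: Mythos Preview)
Your proof is correct and follows essentially the same route as the paper: pass to the normal bundle $E=(TX|_Y)/TY$, show the induced map $G\to\Diff(E\to Y)$ is injective, and then invoke Lemma~\ref{lemma:bundle-trick}. The only notable difference is that the paper dispatches injectivity by a bare reference to Lemma~\ref{lemma:linearization}(2), whereas you spell out the extra step it requires---namely that $dg_x$ being the identity on $T_xY$ and on the quotient $T_xX/T_xY$ forces $dg_x$ to be unipotent, hence the identity since $g$ has finite order---before appealing to connectedness and effectiveness.
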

\begin{proof}
There is an inclusion of vector bundles $TY\hookrightarrow TX|_Y$.
Consider the quotient bundle $E:=TX|_Y/TY\to Y$, which is the normal
bundle of the inclusion $Y\hookrightarrow X$.
Let $\Diff(X,Y)$ be the group of diffeomorphisms of $X$ which
preserve $Y$. There is a natural restriction map $\rho:\Diff(X,Y)\to\Diff(E\to Y)$
given by restricting the diffeomorphisms in $\Diff(X,Y)$ to the first
jet of the inclusion $Y\hookrightarrow X$, which gives a bundle automorphism
$TX|_Y\to TX|_Y$ preserving $TY$, and then projecting to an automorphism of $E$.
Furthermore, if $\Gamma\leq\Diff(X,Y)$
is a finite group then by (2) in Lemma \ref{lemma:linearization} the restriction
$\rho|_\Gamma:\Gamma\to\rho(\Gamma)$ is injective.
Applying this to the group $G\in\aA(X)$ in the statement of the lemma we obtain
a group $G_E:=\rho(G)\in\aA(E\to Y)$ which is isomorphic to $G$. Furthermore,
if $\pi:\Diff(E\to Y)\to\Diff(Y)$ is the map (\ref{eq:map-pi}), then
$\pi(G_E)\in\aA(Y)$ coincides with $G_Y$, which by hypothesis is abelian.
We are thus in the setting of Lemma \ref{lemma:bundle-trick}, so we deduce
that $G_E$ (and hence $G$) has an abelian subgroup of index at most $r!$.
\end{proof}

\section{Actions of finite groups on acyclic manifolds and on compact manifolds
with $\chi\neq 0$}
\label{s:RX}

\begin{theorem}
\label{thm:R-n}
For any $n$ there is a constant $C$ such that any finite group acting
smoothly and effectively on an acyclic smooth $n$-dimensional manifold $X$
has an abelian subgroup of index at most $C$.
\end{theorem}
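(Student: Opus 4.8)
The plan is to deduce the theorem from Corollary~\ref{cor:TM-abelian} applied to $\cC=\cC(\Diff(X))$, the set of finite subgroups of $\Diff(X)$, which is closed under taking subgroups. By that corollary it suffices to produce constants $M,d$ such that every group in $\pP(\cC)\cup\aA(\cC)$ has an abelian subgroup of index at most $M$ generated by at most $d$ elements. Throughout I use that an acyclic manifold is connected and $\FF_p$-acyclic for every prime $p$, so that $\sum_j b_j(X;\FF_p)=1$.

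The crucial geometric input is the full P.~A.~Smith fixed point theorem, which strengthens Lemma~\ref{lemma:cohom-no-augmenta}: for every prime $p$ and every finite $p$-group $H$ acting smoothly on $X$ the fixed locus $X^H$ is $\FF_p$-acyclic, in particular nonempty and connected. Granting nonemptiness, connectedness is then automatic, since Lemma~\ref{lemma:cohom-no-augmenta} forces $b_0(X^H;\FF_p)\le 1$; and by Lemma~\ref{lemma:linearization} the set $X^H$ is a closed smooth submanifold of $X$. Nonemptiness is exactly where acyclicity (rather than merely $\chi(X)\neq 0$) is used, and it is the main obstacle: the inequality of Lemma~\ref{lemma:cohom-no-augmenta} is by itself consistent with an empty fixed locus, so one must invoke the vanishing of the relevant Smith (Tate) cohomology, see \cite[Chap.~III]{B}.

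For $G\in\pP(\cC)$ Smith's theorem provides a point $x\in X^G$, and part~(2) of Lemma~\ref{lemma:linearization} embeds $G$ into $\GL(T_xX)\cong\GL(n,\RR)$ via the derivative at $x$. Jordan's theorem (Theorem~\ref{thm:Jordan}) then gives an abelian subgroup of index at most $C_n$, and any finite abelian subgroup of $\GL(n,\RR)\subseteq\GL(n,\CC)$ is simultaneously diagonalizable over $\CC$, hence generated by at most $n$ elements. Thus $\pP(\cC)$ satisfies $\jJ(C_n,n)$.

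For $G\in\aA(\cC)$ write $G=PQ$ with $P$ an abelian Sylow $p$-subgroup and $Q$ a normal abelian Sylow $q$-subgroup, $p\neq q$. Put $Y:=X^Q$; by Smith's theorem (applied to the $q$-group $Q$) together with Lemma~\ref{lemma:linearization}, $Y$ is a nonempty connected submanifold, and it is $G$-invariant since $Q\vt G$. As $Q$ acts trivially on $Y$, the induced action of $G$ on $Y$ factors through $G/Q\cong P$ and is therefore abelian, so the group $G_Y$ of Lemma~\ref{lemma:submfld-trick} is abelian. With $r:=\dim X-\dim Y\le n$, Lemma~\ref{lemma:submfld-trick} produces an abelian subgroup $A\subseteq G$ with $[G:A]\le r!\le n!$. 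Moreover $P$ and $Q$ each fix a point and hence embed into $\GL(n,\RR)$, so they and their subgroups are generated by at most $n$ elements; since the minimal number of generators of the abelian group $A$ equals the larger of those of its $p$-part (a subgroup of a conjugate of $P$) and its $q$-part (a subgroup of $Q$), $A$ is generated by at most $n$ elements. Taking $M=\max\{C_n,n!\}$ and $d=n$ shows that $\pP(\cC)\cup\aA(\cC)$ satisfies $\jJ(M,n)$, and Corollary~\ref{cor:TM-abelian} yields the desired constant $C$.
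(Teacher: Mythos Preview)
Your proof is correct and follows essentially the same route as the paper's: reduce to Corollary~\ref{cor:TM-abelian}, handle $\pP(\cC)$ via Smith theory plus Jordan's theorem at a fixed point, and handle $\aA(\cC)$ by taking $Y=X^Q$ and invoking Lemma~\ref{lemma:submfld-trick}. Your justification that $A$ is generated by at most $n$ elements (via the Sylow parts of $A$ embedding in $\GL(n,\RR)$) is a minor rephrasing of the paper's observation that every elementary $p$-group acting effectively on $X$ has rank at most $n$.
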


\begin{proof}
Fix $n$ and let $X$ be an acyclic smooth $n$-dimensional manifold.
Let $\cC$ be the set of all finite subgroups of $\Diff(X)$. By Corollary
\ref{cor:TM-abelian} it suffices to prove that $\pP(\cC)\cup\aA(\cC)$ satisfies
the Jordan property. We prove it first for $\pP(\cC)$ and then for $\aA(\cC)$.

Let $G\in\pP(\cC)$ be a finite $p$-group, where $p$ is a prime.
By Smith theory the fixed point set $X^G$ is $\FF_p$-acyclic,
hence nonempty (see \cite[Corollary III.4.6]{B} for the case
$G=\ZZ/p$ and use induction on $|G|$ for the general case, as
in the proof of Lemma \ref{lemma:cohom-no-augmenta} given in \cite{M7}). Let $x\in
X^G$. By (2) in Lemma \ref{lemma:linearization}, linearizing
the action of $G$ at $x$ we get an injective morphism
$G\hookrightarrow \GL(T_xX)\simeq\GL(n,\RR)$. It follows from
Jordan's Theorem \ref{thm:Jordan} that there is an abelian
subgroup $A\leq G$ such that $[G:A]\leq C_n$, where $C_n$
depends only on $n$. Furthermore, since $A$ can be identified
with a subgroup of $\GL(n,\RR)$, it can be generated by at most
$n$ elements. We have thus proved that $\pP(\cC)$ satisfies
$\jJ(C_n,n)$. The same argument also proves that any elementary
$p$-group acting effectively on $X$ has rank at most $n$.

Now let $G\in\aA(\cC)$. By definition, there are two distinct primes $p$ and $q$,
an abelian $p$-Sylow subgroup $P\leq G$,
and a normal abelian $q$-Sylow subgroup $Q\leq G$,
such that $G=PQ$. Let $Y:=X^Q$. By Smith theory, $Y$ is a $\FF_q$-acyclic
manifold (combine again \cite[Corollary III.4.6]{B}
with induction on $|Q|$ as before);
in particular, $Y$ is nonempty and connected. Since $Q$ is normal in $G$,
the action of $G$ on $X$ preserves $Y$. Finally, since the elements of $Q$
act trivially on $Y$, the action of $G$ on $Y$ given by restriction defines an
abelian subgroup of $\Diff(Y)$. This means that we are in the setting of Lemma
\ref{lemma:submfld-trick}, and we can deduce that $G$ has an abelian subgroup
$A\leq G$ of index at most $n!$.
Since, as explained in the previous paragraph, any elementary $p$-group acting on $X$
has rank at most $n$, $A$ can be generated by at most $n$ elements.
We have thus proved that
$\aA(\cC)$ satisfies $\jJ(n!,n)$, and the proof of the theorem is now complete.
\end{proof}

\begin{theorem}
\label{thm:non-zero-Euler}
Let $X$ be a compact connected smooth manifold, possibly with boundary, and satisfying
$\chi(X)\neq 0$. There exists a constant $C$, depending only on the dimension of $X$
and on $H^*(X;\ZZ)$, such that any finite group acting
smoothly and effectively on $X$ has an abelian subgroup of index at most $C$.
\end{theorem}
\begin{proof}
Let $\cC$ be the set of all finite subgroups of $\Diff(X)$. We will again
deduce the theorem from Corollary \ref{cor:TM-abelian}, so we only need to prove
that $\pP(\cC)\cup\aA(\cC)$ satisfies the Jordan property.

To prove that $\pP(\cC)$ satisfies the Jordan property, let $s:=q^e$ be the biggest
prime power dividing $\chi(X)$.
Let $p$ be any prime, and consider a $p$-group $G\in\pP(G)$.
By Lemma \ref{lemma:one-big-stabiliser},
there is a point $x\in X$ whose stabiliser $G_x$ satisfies
$[G:G_x]\leq p^r$, where $p^r$ divides $\chi(X)$. In particular, $[G:G_x]\leq s$.
By Lemma \ref{lemma:linearization} there is an inclusion $G_x\hookrightarrow\GL(T_xX)\simeq\GL(n,\RR)$, where $n=\dim X$. By the same argument
as in the proof of Theorem \ref{thm:R-n}, there is an abelian subgroup $A\leq G_x$
of index $[G_x:A]\leq C_n$ (with $C_n$ depending only on $n$) and which can be generated by $n$ elements. Hence, $\pP(\cC)$ satisfies $\jJ(sC_n,n)$.

We now prove that $\aA(\cC)$ satisfies the Jordan property. Let $G\in\aA(\cC)$.
There exist
two distinct primes $p$ and $q$, an abelian $p$-Sylow subgroup $P\leq G$,
and a normal abelian $q$-Sylow subgroup $Q\leq G$,
such that $G=PQ$. By the arguments used before (involving Lemma \ref{lemma:one-big-stabiliser})
there is a point $x\in X$ whose stabiliser $Q_x$ satisfies
$[Q:Q_x]\leq s$ for some positive integer $s$ depending only on $\chi(X)$.
Since $Q_x$ is abelian and we have an inclusion $Q_x\hookrightarrow\GL(T_xX)$,
we know that $Q_x$ can be generated by $n$ elements.
Define
$$Q':=\bigcap_{\phi\in\Aut(Q)}\phi(Q_x).$$
By the arguments at the end of the proof of Corollary
\ref{cor:TM-abelian},  we have $[Q_x:Q']\leq (s!)^n$.
Since $Q'\leq Q_x$, we have $x\in X^{Q'}$, so $X^{Q'}$ is nonempty.
Also, $Q'$ does not contain any elementary $q$-group of rank greater than
$n$ (because it is a subgroup of $G_x$),
so it can be generated by $n$ elements.
By Lemma \ref{lemma:cohom-no-augmenta} we have
$$\sum_j b_j(X^{Q'};\FF_q)\leq \sum_j b_j(X;\FF_q)\leq K
:=\sum_j\max\{b_j(X;\FF_p)\mid p\text{ prime}\},$$
where $K$ is finite because $X$ is compact.
In particular, $X^{Q'}$ has at most $K$ connected components.
On the other hand,
$Q'$ is a characteristic subgroup of $Q$, and since $Q$ is normal in $G$ it
follows that $Q'$ is also normal in $G$. Hence the action of $P$ on $X$ preserves
$X^{Q'}$. Since the latter has at most $K$ connected components, there exists a
subgroup $P_0\leq P$ of index $[P:P_0]\leq K$ and a connected component
$Y\subseteq X^{Q'}$ such that $P_0$ preserves $Y$.
By Lemma \ref{lemma:one-big-stabiliser}
there is also a subgroup $P'\leq P_0$ which fixes some point in $X$
and such that $[P_0:P']\leq s$, which implies as before
that $P'$ can be generated by at most $n$ elements.
Let $G':=P'Q'$. We can bound
$$[G:G']=[P:P_0][P_0:P'][Q:Q_x][Q_x:Q']\leq sKs(s!)^n.$$
On the other hand, $G'$
preserves $Y$, and its induced action on $Y$ is abelian (because $Q'$ acts
trivially on $Y$ and $P'$ is abelian). By Lemma \ref{lemma:submfld-trick}, there
exists an abelian subgroup $G''\leq G'$ satisfying $[G':G'']\leq n!$.
It follows that
$$[G:G'']=[G:G'][G':G'']\leq M:=sKs(s!)^nn!.$$
Since both $P'$ and $Q'$ can be generated by at most $n$ elements,
$G'$ does not contain any elementary $p$-group or $q$-group of rank
greater than $n$, which implies that
$G''$ can be generated by $n$ elements. We have thus proved that
$\aA(\cC)$ satisfies $\jJ(M,n)$, so the proof of the theorem is complete.
\end{proof}

\section{Actions on integral homology spheres}
\label{s:S}

Recall some standard terminology: given a ring $R$ and an integer $n\geq 0$,
an $R$-homology $n$-sphere is
a topological $n$-manifold $M$ satisfying $H_*(M;R)\simeq H_*(S^n;R)$.
Note that this definition of homology sphere is
more restrictive than that in \cite{B} or \cite{L}, where homology spheres are
not required to be topological manifolds.
An integral homology $n$-sphere is a $\ZZ$-homology $n$-sphere. By
the universal coefficient theorem any integral homology
$n$-sphere is an $\FF_p$-homology $n$-sphere for any prime $p$.
Standard properties of topological manifolds imply that for any
prime $p$ and any integer $n$ any $\FF_p$-homology $n$-sphere
is compact and orientable.

\begin{theorem}
\label{thm:spheres}
For any $n$ there is a constant $C$ such that any finite group acting
smoothly and effectively on a smooth integral homology $n$-sphere has an abelian
subgroup of index at most $C$.
\end{theorem}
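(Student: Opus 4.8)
The plan is to deduce the theorem from Corollary \ref{cor:TM-abelian}, exactly as in the proofs of Theorems \ref{thm:R-n} and \ref{thm:compact-X}: letting $\cC$ be the set of finite subgroups of $\Diff(X)$ for $X$ a smooth homology $n$-sphere, it suffices to verify the Jordan property for $\pP(\cC)$ and for $\aA(\cC)$. When $n$ is even one has $\chi(X)=1+(-1)^n=2\neq 0$, so the whole statement is already contained in Theorem \ref{thm:compact-X} (the constant there depends only on the dimension and the $\FF_\ell$-Betti numbers, which for a homology sphere are those of $S^n$). I would therefore reduce at once to the case $n$ \emph{odd}, where $\chi(X)=0$ and Lemma \ref{lemma:one-big-stabiliser} carries no information. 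The bound on the number of generators is supplied throughout by Mann--Su \cite{MS}. The new geometric input replacing the Euler characteristic is Smith theory together with Borel's formula (see \cite{B}): for a $p$-group acting on an $\FF_p$-homology $n$-sphere every fixed point set is again an $\FF_p$-homology sphere, and for an elementary abelian $p$-group $V$ Borel's formula forces a proper subgroup of $V$ of corank bounded in terms of $n$ to have nonempty fixed point set.

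For $\aA(\cC)$ I would argue as follows. Write $G=PQ$ with $P,Q$ abelian Sylow subgroups and $Q\vt G$. If $G$ acts freely then $G$ has periodic cohomology and hence satisfies the $pq$-conditions; for a group of the form $PQ$ these force $[P,Q]=1$, so $G$ is abelian and there is nothing to prove. If the action is not free, I would replace the point with large stabilizer produced by Lemma \ref{lemma:one-big-stabiliser} in the compact case by a subgroup with a fixed point coming from Borel's formula: applying it to the elementary abelian part of $Q$ yields a subgroup of $Q$ of index bounded in terms of $n$ whose fixed locus is nonempty, and intersecting its $\Aut(Q)$-orbit gives a \emph{characteristic} (hence $G$-normal) subgroup $Q'\subseteq Q$ with $X^{Q'}\neq\emptyset$ and $[Q:Q']$ bounded. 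By Lemma \ref{lemma:cohom-no-augmenta} the set $X^{Q'}$ has at most $\sum_j b_j(X;\FF_q)=2$ components, so a subgroup $P_0\subseteq P$ of index $\leq 2$ preserves a component $Y$; since $Q'$ acts trivially on $Y$ and $P_0$ is abelian, the residual action of $G_0:=P_0Q'\in\aA(X)$ on $Y$ is abelian and Lemma \ref{lemma:submfld-trick} finishes the case.

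For $\pP(\cC)$ I would induct on $n$. If $G$ has rank $\leq 1$ it is cyclic or generalized quaternion and contains an abelian subgroup of index $\leq 2$. If $\rk G\geq 2$ and some \emph{nontrivial normal} subgroup $N\vt G$ satisfies $X^N\neq\emptyset$, then $Y:=X^N$ is an $\FF_p$-homology sphere of dimension $<n$ on which $G/N$ acts; the inductive hypothesis provides an abelian $\bar A\leq G/N$ of bounded index, and applying Borel's formula \emph{on $Y$} to $\bar A$ gives a further subgroup $\bar A_1$ of bounded index fixing some $y\in Y$. Its preimage $A_1^*\leq G$ then fixes $y\in X$ (because $N$ fixes $Y$ pointwise), so linearizing at $y$ via Lemma \ref{lemma:linearization} and invoking Jordan's Theorem \ref{thm:Jordan} produces an abelian subgroup of $A_1^*$, hence of $G$, of index bounded in terms of $n$.

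The main obstacle is the remaining \emph{stuck} case, in which no nontrivial normal subgroup has a fixed point. Since minimal normal subgroups of a $p$-group are central of order $p$, this forces $Z(G)$ to be cyclic and its subgroup $C\cong\ZZ_p$ to act \emph{freely}; the model examples are extraspecial groups acting on $S^{2p^{m}-1}$ through a faithful irreducible representation, for which $C=Z(G)$ acts antipodally and the minimal index of an abelian subgroup is $\sim p^{m}\sim (n+1)/2$. Here linearization is unavailable because $G$ has no fixed point, and the whole difficulty is to show that non-commutativity together with a free central $\ZZ_p$ forces the prime $p$ and the symplectic rank of $[G,G]$ to be bounded in terms of $n$ --- the smooth analogue of the fact that a faithful irreducible representation of such a group has dimension $\geq p^{m}$. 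My plan for this case is to exploit that, $\rk G\geq 2$ being in force, Borel's formula still yields a non-central element $h$ of order $p$ with $X^h\neq\emptyset$; its fixed set $Y=X^h$ is a lower-dimensional homology sphere on which the centralizer $C_G(h)$ acts with $h$ central and with the image of $C$ still acting freely, so that feeding $Y$ back into the induction on $n$ (after controlling the index $[G:C_G(h)]$, i.e. the size of the conjugacy class of $h$) should close the argument. Making this index bound effective, and thereby turning the representation-theoretic dimension estimate into a statement about arbitrary smooth actions, is the step I expect to require the most care.
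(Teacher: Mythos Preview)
Your overall strategy---reduce to $\pP(\cC)$ and $\aA(\cC)$ via Corollary~\ref{cor:TM-abelian}---is exactly the paper's, and your reduction of the even-dimensional case to Theorem~\ref{thm:compact-X} is valid (though the paper treats all~$n$ uniformly). However, both branches of your argument contain genuine gaps.

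\textbf{The $\pP(\cC)$ case.} Your ``stuck'' case is the whole point, and the sketch you give does not close it. Finding a non-central $h$ of order~$p$ with $X^h\neq\emptyset$ via Borel's formula is fine, but you offer no mechanism to bound $[G:C_G(h)]$ in terms of~$n$; this index is the size of the conjugacy class of~$h$, and for a general $p$-group there is no a~priori reason it is controlled by the dimension of the sphere. Making this work would essentially amount to reproving the theorem of Dotzel--Hamrick~\cite{DH}, which is precisely what the paper invokes: for any finite $p$-group $G$ acting smoothly on a $\ZZ_p$-homology $n$-sphere there is a real representation $\rho:G\to\GL(V)$ with $\dim V^H=n(H)+1$ for every $H\leq G$; effectiveness forces $\rho$ to be faithful, so $G\hookrightarrow\GL(n+1,\RR)$ and Jordan's Theorem~\ref{thm:Jordan} applies directly. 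This disposes of $\pP(\cC)$ in one stroke and bypasses your induction entirely.

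\textbf{The $\aA(\cC)$ case.} Your free-action argument is incorrect. A free action of $G$ on a homology sphere gives periodic cohomology, hence the $p^2$-conditions (so $P$ and $Q$ are cyclic), but it does \emph{not} give the $pq$-conditions for odd $p,q$: the non-abelian group $\ZZ_q\rtimes\ZZ_p$ (with $p\mid q-1$, both odd) has periodic cohomology and acts freely and linearly on some sphere, yet $[P,Q]\neq 1$. Consequently your dichotomy ``$G$ free / $G$ not free'' misses the essential case where $Q$ is cyclic and its socle $Q'\simeq\ZZ_q$ acts freely (Borel's formula on $\Omega_1(Q)$ is then vacuous). The paper handles $\aA(\cC)$ by induction on the dimension~$m$ of the homology sphere (Lemma~\ref{lemma:induction}), splitting according to whether $S^Q\neq\emptyset$ (then induct on $S^Q$ and use Lemma~\ref{lemma:submfld-trick}) or $S^Q=\emptyset$; in the latter case, if $\operatorname{rank}\Omega_1(Q)\geq 2$ one uses Borel's formula as you suggest, but if $\Omega_1(Q)\simeq\ZZ_q$ acts freely the paper instead invokes Lemma~\ref{lemma:GZ} (after Guazzi--Zimmermann): the action of $\ZZ_d$ on $\ZZ_q$ by conjugation has image of order dividing $n+1$ in $\ZZ_q^*$, which bounds the index of the centralizer of $Q$ in each cyclic factor of $P$, and hence the index of an abelian subgroup of $G$.
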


Before proving the theorem we collect a few facts which will be
used in our arguments.

For any natural number $n$ we denote by $\Gamma_n$ the
cyclic group of $n$ elements.

Let $p$ be any prime, and let $G$ be a finite $p$-group acting
on an $\FF_p$-homology $n$-sphere $S$. Then the fixed point set
$S^{G}$ is an $\FF_p$-homology $n(G)$-sphere for some integer
$-1\leq n(G)\leq n$, where a $(-1)$-sphere is by convention the
empty set (see \cite[IV.4.3]{B} for the case $G=\Gamma_p$; the
general case follows by induction, see \cite[IV.4.5]{B}).
Furthermore, if $S$ is smooth and the action of $G$ is smooth
and nontrivial, then $S^{G}$ is a smooth proper submanifold of
$S$ and $n(G)<n$.

Suppose that $G\simeq\Gamma_p^r$ is an elementary abelian
$p$-group acting on an $\FF_p$-homology $n$-sphere $S$. The
following formula (\cite[Theorem XIII.2.3]{B}) was proved by
Borel:
\begin{equation}
\label{eq:Borel}
n-n(G)=\sum_{H\leq G \text{ subgroup}\atop [G:H]=p}(n(H)-n(G)).
\end{equation}
Now suppose that $G$ is a finite $p$-group acting on a smooth
$\FF_p$-homology $n$-sphere $S$. Dotzel and Hamrick proved in
\cite{DH} that there exists a real representation of
$\rho:G\to\GL(V)$ such that $\dim V^H=n(H)+1$ for each
subgroup $H\leq G$. This implies that $\dim V=n+1$
(take $H=\{1\}$) and that, if the action is effective, $\rho$
is injective (because for any nontrivial $H\leq G$ we have
$n(H)<n$). Consequently, if $G$ acts effectively on $S$ then we
can identify $G$ with a subgroup of $\GL(n+1,\RR)$ (which, when
$S=S^n$, does not mean that the original action of $G$ on $S^n$
is necessarily linear!). In particular, if $G\simeq\Gamma_p^r$
acts effectively on $S$, then $r\leq n+1$. Alternatively, $r$
can be bounded using a general theorem of Mann and Su
\cite{MS}.

Let $p$ be now an odd prime, let $d$ be a positive integer, and
consider a morphism $\psi:\Gamma_d\to\Aut(\Gamma_p)$.
The following is a very slight modification of a result of
Guazzi and Zimmermann \cite[Lemma 2]{GZ}:

\begin{lemma}
\label{lemma:GZ} If $\Gamma_p \rtimes_{\psi} \Gamma_d$ acts
effectively on a smooth $\FF_p$-homology $n$-sphere $S$ and the
restriction of the action to $\Gamma_p$ is free then all
elements in $\psi(\Gamma_d)$ have order dividing $n+1$.
\end{lemma}

The original result of Guazzi and Zimmermann does not require
the restriction of the action to $\Gamma_p$ to be free, but it
requires the action of $\Gamma_p {\rtimes_{\psi}}\Gamma_d$ to
be orientation preserving. The proof we give of Lemma
\ref{lemma:GZ} is essentially the same as \cite[Lemma 2]{GZ};
we provide details to justify that the result is valid without assuming
that $\Gamma_p {\rtimes_{\psi}}\Gamma_d$ acts
orientation-preservingly.

\begin{proof}
Since $S$ is smooth and compact, $p$ is odd, and the action of
$\Gamma_p$ is free, $n$ must be odd, say $n=2\nu+1$. Let
$\pi:S_{\Gamma_p}\to B\Gamma_p$ be the Borel construction and
let $\zeta:S/\Gamma_p\to B\Gamma_p$ be the composition of a
homotopy equivalence $S/\Gamma_p\simeq S_{\Gamma_p}$ (which
exists because $\ZZ_p$ acts freely on $S$) with $\pi$. A simple
argument using the Serre spectral sequence for $\zeta$ proves
that the map $H^k(\zeta):H^k(B\Gamma_p;\FF_p)\to
H^k(S/\Gamma_p;\FF_p)$ is an isomorphism for $0\leq k\leq n$
(note that, since $p$ is odd, the action of $\Gamma_p$ on $S$
is orientation preserving, so the second page of the Serre
spectral sequence for $\zeta$ has entries
$H^u(B\Gamma_p;\FF_p)\otimes H^v(S;\FF_p)$).

Let $g\in\Gamma_d$ be a generator. There is an integer $y$ such
that $\psi(g)$ sends any $\gamma\in\Gamma_p$ to $\gamma^y$. We
can choose generators $\alpha,\beta$ of $H^*(B\Gamma_p;\FF_p)$
as an $\FF_p$-algebra with $\deg\alpha=1$ (so $\alpha^2=0$),
$\deg\beta=2$, and $\beta=\text{b}(\alpha)$, where $\text{b}$
denotes the Bockstein. In particular, $H^n(B\Gamma_p;\FF_p)$ is
generated by $\alpha\beta^{\nu}$ as an $\FF_p$-vector space.
The action of $\Gamma_d$ on $\Gamma_p$ induces an action
$\phi^*:\Gamma_d\to\Aut (H^*(B\Gamma_p;\FF_p))$ satisfying
$\phi^*(g)(\alpha)=y\alpha$, and by naturality and linearity of
the Bockstein we also have $\phi^*(g)(\beta)=y\beta$. This
implies that the action of $g$ on $H^n(B\Gamma_p;\FF_p)$ is
multiplication by $y^{1+\nu}$. Since the map $\zeta$ is
$\Gamma_d$-equivariant, the action of $g$ on
$H^n(S/\Gamma_p;\FF_p)$ is given by multiplication by
$y^{1+\nu}$. Now, $S$ is compact and orientable and the action
of $\Gamma_p$ is orientation preserving, so $S/\Gamma_p$ is a
compact connected and orientable $n$-manifold. The action of
$g$ on $H^n(S/\Gamma_p;\FF_p)$ is the reduction mod $p$ of the
action on $H^n(S/\Gamma_p;\ZZ)\simeq\ZZ$. Since $g$ acts as a
diffeomorphism on $S/\Gamma_p$, its induced action on
$H^n(S/\Gamma_p;\FF_p)$ is multiplication by $\pm 1$, so it
follows that $y^{1+\nu}\equiv \pm 1\mod p$. Consequently,
$y^{2(1+\nu)}=y^{n+1}\equiv 1\mod p$.
\end{proof}

\subsection{Proof of Theorem \ref{thm:spheres}}
Fix some $n\geq 1$, let $S$ be a smooth integral homology $n$-sphere,
and let $\cC$ be the set of finite subgroups of $\Diff(S)$.
We are going to use Corollary \ref{cor:TM-abelian}, so we need to
prove that $\pP(\cC)\cup\aA(\cC)$ satisfies the Jordan property.
As in the proofs of the other two theorems of this paper, we treat separately
$\pP(\cC)$ and $\aA(\cC)$.

If $G\in\pP(\cC)$ then by the theorem of Dotzel and Hamrick \cite{DH} we may identify
$G$ with a subgroup of $\GL(n+1,\RR)$. By Theorem \ref{thm:Jordan} there is an abelian
subgroup $A\leq G$ of index at most $C_{n+1}$. Furthermore, $A$ can be generated by
$n+1$ elements. Consequently, $\pP(\cC)$ satisfies $\jJ(C_{n+1},n+1)$.

The fact that $\aA(\cC)$ satisfies the Jordan property is a consequence of the following
lemma, combined with the existence of a
uniform upper bound, for any prime $p$, on the rank of elementary
$p$-groups acting effectively on $S^n$ (such bound follows, as we
said, either from the theorem of Dotzel and Hamrick \cite{DH}
or from that of Mann and Su \cite{MS}).

\begin{lemma}
\label{lemma:induction} Given two integers $m\geq 0,r\geq 1$
there exists an integer $K_{m,r}\geq 1$ such that for any two
distinct primes $p$ and $q$, any abelian $p$-group $P$ of rank
at most $r$, any abelian $q$-group $Q$, any morphism
$\phi:P\to\Aut(Q)$, any smooth $\FF_q$-homology $m$-sphere $S$,
and any smooth and effective action of $G:=Q\rtimes_{\phi} P$
on $S$, there is an abelian subgroup $A\leq G$ satisfying
$[G:A]\leq K_{m,r}$.
\end{lemma}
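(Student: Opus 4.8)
The plan is to induct on $m$, producing directly an abelian subgroup of $G$ whose index is bounded in terms of $m$ and $r$. Note first that $G=Q\rtimes_\phi P\in\aA(S)$, with $Q$ its normal abelian $q$-Sylow and $P$ its abelian $p$-Sylow. Write $E:=\Omega_1(Q)$ for the socle of $Q$; it is characteristic in $Q$, hence normal in $G$, and since $Q$ is abelian the conjugation action of $G$ on $E$ factors through $P$. The base case $m\le 0$ is trivial, as $\Diff(S^0)=\ZZ_2$. Suppose now $S^Q\neq\emptyset$: being $G$-invariant with $Q$ acting trivially, the induced $G$-action on it factors through the abelian group $P$. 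If $\dim S^Q\ge 1$ then $S^Q$ is connected and Lemma \ref{lemma:submfld-trick} gives an abelian subgroup of index $\le(m+1)!$; if $\dim S^Q=0$ then a subgroup of index $2$ fixes a point of $S^Q$, and Lemma \ref{lemma:linearization} together with Theorem \ref{thm:Jordan} gives an abelian subgroup of index $\le 2C_m$.

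Assume henceforth $S^Q=\emptyset$. If $E$ acts freely on $S$, then $E$ is cyclic: indeed $(\ZZ_q)^2$ cannot act freely on a $\ZZ_q$-homology sphere, since Borel's formula (\ref{eq:Borel}) would then read $m+1=0$. Thus $E=\ZZ_q$ and $Q$ is cyclic. When $q=2$ the $p$-group $\phi(P)$ embeds in the $2$-group $\Aut(Q)$, so $\phi(P)=1$ and $G$ is abelian; when $q$ is odd, each $g\in P$ acts on $E$ by some $y\in\FF_q^*$, and Lemma \ref{lemma:GZ} applied to $\ZZ_q\rtimes\la g\ra\subseteq G$ (with $\ZZ_q$ acting freely) forces $y^{m+1}=1$. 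Hence the image of $P$ in $\Aut(E)\cong\FF_q^*$ has order dividing $m+1$; since $p\neq q$, a $p$-automorphism of $Q$ trivial on $E$ is trivial, so $[P:\Ker\phi]\le m+1$ and $A:=Q\cdot\Ker\phi$ is abelian of index $\le m+1$.

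The remaining case, $S^Q=\emptyset$ with $E$ not free, is the heart of the matter. Then some nonzero $w\in E$ has $S^{\la w\ra}\neq\emptyset$, so the fixed-point subspheres $S^H$ ($H\le E$) form a nontrivial stratification; each is a $\ZZ_q$-homology sphere of dimension $<m$ by Smith theory, and $G$ permutes them through $P$ (as $Q$, being abelian, fixes each setwise). The theorem of Dotzel and Hamrick realises the fixed-point dimensions of the $E$-action as those of a real representation of dimension $m+1$, so the pointwise stabilisers of these subspheres lie among the at most $2^{m+1}$ intersections of the relevant weight hyperplanes; in particular the number of subspheres maximal under inclusion is bounded in terms of $m$. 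I would therefore pass to the subgroup $P_1\subseteq P$ of correspondingly bounded index stabilising one such $Y=S^{H_0}$, and set $G_1:=Q\rtimes P_1$, which preserves $Y$. If $\dim Y=0$ a fixed point and Theorem \ref{thm:Jordan} finish it; if $\dim Y\ge 1$ I would apply the inductive hypothesis to the effective action of the quotient $\ov{G_1}=\ov Q\rtimes\ov{P_1}$ on the lower-dimensional homology sphere $Y$, obtaining an abelian subgroup of index $\le K_{\dim Y,r}$, and pull it back to $G_1^\flat\subseteq G_1$ of the same index. By Schur--Zassenhaus $G_1^\flat=(G_1^\flat\cap Q)\rtimes P^\flat$ with both factors abelian, so $G_1^\flat\in\aA(S)$, its action on $Y$ is abelian, and Lemma \ref{lemma:submfld-trick} yields an abelian subgroup of $G_1^\flat$ of index $\le(m+1)!$. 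The product of the three indices is the desired bound.

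The main obstacle is precisely this last case: with $S^Q=\emptyset$ and $E$ not free, no large subgroup of $P$ need fix a point, so Theorem \ref{thm:Jordan} is unavailable directly and one must instead descend to a proper, positive-dimensional, $P_1$-invariant fixed subsphere on which to recurse. The decisive technical input is the bound on the number of maximal fixed subspheres (via Dotzel--Hamrick, or by iterating Borel's formula (\ref{eq:Borel})), which controls $[P:P_1]$, and I expect this combinatorial estimate to be the crux; a secondary point to verify carefully is that the pulled-back group $G_1^\flat$ still lies in $\aA(S)$ so that Lemmas \ref{lemma:bundle-trick}--\ref{lemma:submfld-trick} apply. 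The rank hypothesis enters to keep the induction uniform, since $\ov{P_1}$ again has rank $\le r$, and (through Mann--Su) to bound $\rk E$ so that the stratification is finite; note that the rotation-number estimate of Lemma \ref{lemma:GZ} is genuinely available only for a free rank-one subgroup, which is exactly why, when $P$ acts on $E$ with an irreducible summand of rank $>1$ and hence no invariant line, one is forced to reduce to a fixed subsphere rather than argue with eigenvalues directly.
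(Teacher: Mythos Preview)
Your proof is correct and follows the same inductive architecture as the paper: induct on $m$, split according to whether $S^Q$ is empty, and in the empty case split again according to whether the socle $E=\Omega_1(Q)$ acts freely. The key inputs (Smith theory, Borel's formula, the Guazzi--Zimmermann lemma, and Lemma~\ref{lemma:submfld-trick}) are the same.

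There are two minor but genuine differences worth noting. First, when $S^Q\neq\emptyset$ you observe directly that the induced action on $S^Q$ factors through the abelian group $P$ and apply Lemma~\ref{lemma:submfld-trick} immediately; the paper instead phrases this step as an application of the inductive hypothesis (which is trivially satisfied since $G_0$ is already abelian), so your treatment is slightly cleaner. Second, and more substantively, in the case $S^Q=\emptyset$ with $E$ not acting freely you invoke Dotzel--Hamrick to bound the number of maximal fixed subspheres and then pass to a stabilised one. The paper takes a more direct route: Borel's formula~(\ref{eq:Borel}) applied to $Q'\simeq(\ZZ_q)^l$ shows that among the index-$q$ subgroups $H\subset Q'$ there are at most $m+1$ with $S^H\neq\emptyset$, and $G$ permutes this finite set by conjugation, yielding a subgroup of index $\le m+1$ normalising some such $H$. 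This avoids the representation-theoretic detour through Dotzel--Hamrick and gives a sharper bound with less work. Conversely, in the case $E\simeq\ZZ_q$ free with $q$ odd, your argument is tighter than the paper's: you bound the image of $P$ in $\Aut(E)\cong\FF_q^*$ directly by $m+1$, whereas the paper argues factor by factor on a cyclic decomposition of $P$ and obtains only $(m+1)^r$.
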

\begin{proof}
Fix some integer $r\geq 1$. We prove the lemma, for this fixed value of $r$,
using induction on $m$. The case $m=0$ being obvious, we may suppose that $m>0$ and
assume that Lemma \ref{lemma:induction} is true for smaller values of $m$.
Let $p,q,P,Q,\phi,G,S$ be as in the statement of the lemma, and take a smooth
effective action of $G$ on $S$.

Suppose first that $S^Q\neq\emptyset$. Then $S^Q$ is a smooth
$\FF_q$-homology sphere of smaller dimension than $S$.
Furthermore, since $Q$ is normal in $G$, the action of $G$ on
$S$ preserves $S^Q$. Let $G_0\leq\Diff(S^Q)$ be the
diffeomorphisms of $S^Q$ induced by restricting the action of
the elements of $G$ on $S$ to $S^Q$. Then $G_0$ is a quotient
of $G$, and this implies that $G_0\simeq Q_0\rtimes P_0$, where
$P_0$ (resp. $Q_0$) is a quotient of $P$ (resp. $Q$). Hence we
may apply the inductive hypothesis to the action of $G_0$ on
$S^Q$ and deduce that there exists an abelian subgroup
$A_0\leq G_0$ satisfying $[G_0:A_0]\leq K_{m-1,r}$. Let
$\pi:G\to G_0$ be the quotient map (i.e., the restriction of
the action to $S^Q$), and let $G':=\pi^{-1}(A_0)\leq G$.
Then $[G:G']\leq K_{m-1,r}$ and the action of $G'$ satisfies
the hypothesis of Lemma \ref{lemma:submfld-trick} with $X=S$
and $Y=S^Q$. Hence, there is an abelian subgroup $A\leq
G'$ satisfying $[G':A]\leq m!$. By the previous estimates $A$
is an abelian subgroup of $G$ of index $[G:A]\leq m!K_{m-1,r}$.

Now assume that $S^Q=\emptyset$. Let $Q':=\{\eta\in Q\mid
\eta^q=1\}\leq Q$. Then $Q'\simeq\Gamma_q^l$. Since $Q'$
is a characteristic subgroup of $Q$ and $Q$ is normal in $G$,
$Q'$ is normal in $G$. We distinguish two cases.

Suppose first that $l\geq 2$. The Borel formula (\ref{eq:Borel}) applied to the
action of $Q'$ on $S$ gives
$$m+1=\sum_{H\leq Q' \text{ subgroup}\atop [Q':H]=p}(n(H)+1).$$
All summands on the RHS are nonnegative integers. So at least one summand
is strictly positive, and there are at most $m+1$ strictly positive summands.
Hence, the set
$$\hH:=\{H\text{ subgroup of }Q'\mid S^H\neq\emptyset,\,[Q':H]=q\}$$
is nonempty and has at most $m+1$ elements. The action of $G$
on $Q'$ by conjugation permutes the elements of $\hH$, so there
is a subgroup $G'\leq G$ fixing some element $H\in\hH$ and
such that $[G:G']\leq m+1$. The fact that $G'$ fixes $H$ as an
element of $\hH$ means that $H$ is a normal subgroup of $G'$,
so the action of $G'$ on $S$ preserves $S^H\neq\emptyset$. We
now proceed along similar lines to the previous case. Let
$G''\leq\Diff(S^H)$ be the diffeomorphisms of $S^H$
induced by restricting the action of the elements of $G'$ on
$S$ to $S^H$. Then $G''$ is a quotient of $G'$ and $S^H$ is a
smooth $\FF_q$-homology sphere of dimension strictly smaller
than $m$; we may thus apply the inductive hypothesis and deduce
the existence of an abelian subgroup $A'\leq G''$ of index
at most $K_{m-1,r}$. Letting $G_a\leq G'$ be the preimage
of $A'$ under the projection map $G'\to G''$ we apply Lemma
\ref{lemma:submfld-trick} to the action of $G_a$ near the
submanifold $S^H\subseteq S$ and conclude that $G_a$ has an
abelian subgroup $A$ of index at most $m!$. Then $[G:A]\leq
(m+1)m!K_{m-1,r}$.

Finally, suppose that $l=1$. In this case $Q'$ acts freely on
$S$ and $Q\simeq\Gamma_{q^s}$. If $q=2$ then
$|\Aut(\Gamma_{q^s})|=(q-1)q^{s-1}$ is equal to $2^{s-1}$ and
since $p\neq 2$ the morphism $\phi:P\to\Aut(Q)$ is trivial,
which means that $G$ is abelian and there is nothing to prove.
Assume then that $q$ is odd. Taking an isomorphism $P\simeq
\Gamma_{p^{e_1}}\times\dots\times \Gamma_{p^{e_c}}$ (with
$c\leq r$, by our assumption on the $p$-rank of $P$) we may
apply Lemma \ref{lemma:GZ} to the restriction of $\phi$ to each
summand, $\phi|_{\Gamma_{p^{e_i}}}:\Gamma_{p^{e_i}}\to
\Aut(\Gamma_q)$ and conclude that there is a subgroup
$G_i\leq \Gamma_{p^{e_i}}$ of index at most $m+1$ such that
$\phi(G_i)$ contains only the trivial automorphism of
$\Gamma_q$, i.e., $G_i$ commutes with $Q'\simeq\Gamma_q$. Let
$P_0:=G_1\times\dots\times G_c$. Then $[P:P_0]\leq (m+1)^c$.
Finally, since, if we identify $\Gamma_q$ with the $q$-torsion
of $\Gamma_{q^s}$, the group
$$\{\alpha\in\Aut(\Gamma_{q^s})\mid \alpha(t)=t\text{ for every $t\in\Gamma_q$}\}$$
is a $q$-group, it follows that $P_0$ not only commutes with
$Q'$, but also with all the elements of $Q$. Consequently
$A:=P_0Q$ is an abelian group, and we have
$$[G:A]\leq (m+1)^c\leq (m+1)^r.$$
This completes the proof of the induction step, and with it that of Lemma \ref{lemma:induction}.
\end{proof}

\end{document}